\theoremstyle{definition}
\newtheorem*{acknowledgement}{Acknowledgement}
\def\Xint#1{\mathchoice
{\XXint\displaystyle\textstyle{#1}}%
{\XXint\textstyle\scriptstyle{#1}}%
{\XXint\scriptstyle\scriptscriptstyle{#1}}%
{\XXint\scriptscriptstyle\scriptscriptstyle{#1}}%
\!\int}
\def\XXint#1#2#3{{\setbox0=\hbox{$#1{#2#3}{\int}$ }
\vcenter{\hbox{$#2#3$ }}\kern-.6\wd0}}
\def\dashint{\Xint-}
\DeclareMathOperator{\divergence}{div}
\numberwithin{equation}{section}
\begin{document}
\title{ Liouville theorems for   stationary flows of shear thickening fluids in $2D$
\thanks
{E-mail addresses: \   guozhang@jyu.fi and  guozhang625@gmail.com}
}
\author{{ $\text{ Guo Zhang}$} \\
{\small Department of Mathematics and Statistics, P.O. Box 35 (MaD), FI-40014}\\
{\small University of Jyv\"{a}skyl\"{a}, Finland }}

\date{}
\maketitle

\begin{abstract}
In this paper we consider the entire weak solutions $u$ of the equations  for stationary flows of shear
thickening fluids in the plane and prove
Liouville theorems under the  conditions on the finiteness of energy and under the integrability condition of the solutions.
\end{abstract}

{\bf MR Subject Classification:}\ \ 76 D 05, 76 D 07, 76 M 30, 35 Q 30.\\
{\bf Keywords:}\ \ {Stationary flows,  Shear thickening fluids, Entire weak solutions, Liouville theorems. }
\bigbreak

\section{Introduction}
In this paper, we prove different types of Liouville theorems for
 the entire weak solutions $u:\mathbb{R}^2 \rightarrow \mathbb{R}^2,\  \pi:\mathbb{R}^2 \rightarrow \mathbb{R}$ of
 the following system

\begin{equation}\label{a}
\left\{
\renewcommand{\arraystretch}{1.25}
\begin{array}{lll}
- \divergence [T(\varepsilon(u))]+u^k\partial_{k}u+D\pi=0,\\
\divergence u=0\ \ \ \ \ \  \text{in\ \  $\mathbb{R}^2$},
\end{array}
\right.
\end{equation}
which describes the stationary flow of an incompressible generalized Newtonian fluid. In  equation \eqref{a},  $u$ denotes the velocity field, $\pi$
the pressure function,  $u^k\partial_{k}u$  the convective term, and $ T$ represents the stress tensor. As usual $\varepsilon(u)$ is the
symmetric derivative of $u$, i.e. $\varepsilon(u)=\frac{1}{2}(Du+(Du)^{T})=\frac{1}{2}(\partial_iu^k+\partial_ku^i)_{1\leq i,k \leq 2}$.

We assume that the stress tensor is the gradient of a potential $H:S^2\rightarrow \mathbb{R}$ defined on the space $S^2$ of all
symmetric $(2\times 2)$ matrices of the following form
\begin{equation}\label{b}
\renewcommand{\arraystretch}{1.25}
\begin{array}{lll}
H(\varepsilon)=h(|\varepsilon|),
\end{array}
\end{equation}
where $h$ is a nonnegative  function of class $C^2$. Thus
\begin{equation}\label{c}
 \renewcommand{\arraystretch}{1.25}
\begin{array}{lll}
T(\varepsilon)=DH(\varepsilon)=\mu(|\varepsilon|)\varepsilon, \ \ \ \mu(t)=\frac{h'(t)}{t}.
\end{array}
\end{equation}

Here $\mu$ denotes the viscosity coefficient. In case of generalized Newtonian fluids, it may depend on $\varepsilon(u)$.
This means that it depends on the motion of the fluids. If $\mu(t)$ is an increasing function,  the fluid is called shear thickening one.
If  $\mu(t)$ is a decreasing function,  the fluid is shear thinning. If $\mu(t)$ is a constant, then the fluid is Newtonian and 
$(1.1)$ reduces to the stationary Navier-Stokes equations for incompressible Newtonian fluids. For the further
mathematical and physical explanations, we refer to Ladyzhenskaya
\cite{La}, Galdi \cite{Ga1,Ga2}, Malek, Necas, Rokyta and Ruzicka
\cite{MNRR}, and Fuchs and Seregin \cite{FS}.

In the whole paper, we will concentrate on the following types of
shear thickening fluids. To be precise,  the potential $h$
satisfies the following conditions:
\[
\renewcommand{\arraystretch}{1.25}
\begin{array}{lll}
h\  \text{is strictly increasing and convex}\\
\text{together with $ h''(0) > 0$ and $\lim\limits_{t\rightarrow 0}\frac{h(t)}{t}=0$}.
\end{array}
\eqno(A1)
\]
\[
\renewcommand{\arraystretch}{1.25}
\begin{array}{lll}
\text{(doubling property) there exists a constant $a$} \\
\text{ such that} \  h(2t)\leq a h(t)\  \text{for all} \  t \geq 0.
\end{array}
\eqno(A2)
\]
\[
\renewcommand{\arraystretch}{1.25}
\begin{array}{lll}
\text{we have}\  \frac{h'(t)}{t} \leq h''(t)\  \text{for any}\  t \geq 0.
\end{array}
\eqno(A3)
\]

The study of Liouville type of theorems for Navier-Stokes equations goes back to the work of Gilbarg and Weinberger\cite{GW}. They
showed, among the others,  that the entire
solutions $u$ of stationary Navier-Stokes equations in the plane are constants under the condition:$\int_{\mathbb{R}^2}|Du|^2 dx$ $<\infty$.
For the  unstationary backward  Navier-Stokes equations in 2D, recently, Koch, Nadirashvili, Seregin and Sverak  \cite{KNSS} showed
that  $u(x,t)=b(t)\  \text{on}\  \mathbb{R}^2\times (-\infty,0)$ provided the solutions are bounded. Clearly, this result
implies the  Liouville theorem for stationary Navier-Stokes equations, that is, bounded solutions to stationary Navier-Stokes equations in $2D$ are constants.

For  general potential $h$ satisfying $(A1)-(A3)$, very
recently Fuchs \cite{Fu1}
showed  bounded solution $u$ of \eqref{a} must be a constant vector provided the solution satisfies the asymptotic
 behavior $|u-u_{\infty}|\rightarrow 0 $ at infinity, where $u_{\infty}$ is a constant vector. Later, the author removed
 the above assumption on $u$ at infinity and showed that
every bounded solution $u$ of \eqref{a} must be a constant vector in \cite{Z}.

Under  different hypothesists that the  flow is slow
(which means the convective term vanishes) and  energy is finite,
i.e. $\int_{\mathbb{R}^2}h(|Du|)dx<\infty$, Fuchs \cite{Fu1} showed
that the velocity field $u$ is a constant vector.
 In this note, we remove the assumption that the flow is slow and prove the following theorem which is the analogue result of Gilbarg and Weinberger in
the setting of shear thickening fluids.

\begin{theorem}\label{t1}
 Let $u\in C^1(\mathbb{R}^2,\mathbb{R}^2)$ be an entire weak
 solution to  \eqref{a} i.e.
\begin{equation}\label{d}
 \renewcommand{\arraystretch}{1.25}
\begin{array}{lll}
 \displaystyle \int_{\mathbb{R}^2}T(\varepsilon(u))\colon \varepsilon(\varphi) dx - \displaystyle \int_{\mathbb{R}^2}u^k u^i \partial_k \varphi^i dx = 0
\end{array}
\end{equation}
for all $\varphi \in C_0^\infty(\mathbb{R}^2,\mathbb{R}^2)$, $\divergence  \varphi=0$, and  satisfy the condition $\int_{\mathbb{R}^2}h(|Du|) dx<\infty$.
Then $u$ is a constant vector.
\end{theorem}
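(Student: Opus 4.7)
The plan is to exploit the two-dimensional stream-function representation of $u$ to construct an admissible divergence-free cut-off test function in \eqref{d} that is essentially $u$ itself, and then to use the monotonicity built into (A1)--(A3) together with the finiteness of $\int_{\mathbb{R}^2}h(|Du|)\,dx$ to force $\int_{\mathbb{R}^2}h(|Du|)\,dx=0$, whence $Du\equiv 0$ and $u$ is constant.

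Because $\mathrm{div}\,u=0$ on $\mathbb{R}^2$, there exists $\psi\in C^2(\mathbb{R}^2)$ with $u=\nabla^\perp\psi$. For $R>1$ fix a radial cut-off $\eta_R\in C_0^\infty(B_{2R})$ with $\eta_R\equiv 1$ on $B_R$ and $|D^k\eta_R|\le CR^{-k}$, and set $\varphi_R:=\nabla^\perp(\eta_R\psi)=\eta_R u+\psi\,\nabla^\perp\eta_R$. Then $\varphi_R\in C_0^1(\mathbb{R}^2,\mathbb{R}^2)$ is automatically divergence-free, hence admissible in \eqref{d}. The pointwise identity $u\cdot\nabla\psi=\nabla^\perp\psi\cdot\nabla\psi\equiv 0$, together with one integration by parts using $\mathrm{div}\,u=0$, reduces the convective integral $\int u^k u^i\,\partial_k\varphi_R^i\,dx$ to the purely annular sum $\tfrac12\int_{A_R}|u|^2\,u\cdot\nabla\eta_R\,dx+\int_{A_R}\psi\,u^k u^i\,\partial_k(\nabla^\perp\eta_R)^i\,dx$, where $A_R:=B_{2R}\setminus B_R$. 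On the stress side, convexity and doubling applied to $h$ yield $\int \eta_R\,T(\varepsilon(u))\colon\varepsilon(u)\,dx\gtrsim \int_{B_R}h(|Du|)\,dx$, while the remaining stress contribution $\int T(\varepsilon(u))\colon\varepsilon(\psi\nabla^\perp\eta_R)\,dx$ is supported on $A_R$.

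The problem thus reduces to showing that every $A_R$-integral vanishes along some sequence $R_j\to\infty$. The stress error is bounded, via the Orlicz--Young inequality and the doubling property (A2), by $\int_{A_R}h(|Du|)+\int_{A_R}h(|u|/R)+\int_{A_R}h(|\psi|/R^2)$; a Chebyshev selection in annular shells, drawing on the finiteness of $\int h(|Du|)$, produces $R_j$ with $\int_{A_{R_j}}h(|Du|)\to 0$, and the other pieces follow by Poincar\'e-type inequalities in Orlicz spaces applied to $u$ and to a zero-circular-mean modification of $\psi$. The main obstacle is the cubic quantity $R^{-1}\int_{A_R}|u|^3\,dx$: controlling it needs an \emph{a priori} growth estimate on the spherical means and oscillation of $u$, the analogue of Gilbarg--Weinberger's $|\overline u(r)|=O((\log r)^{1/2})$ bound but within the Orlicz framework generated by $h$. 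I would derive such a bound by combining a circular Poincar\'e inequality (oscillation on $\partial B_r$ controlled by a tangential integral involving $h(|Du|)$) with a mean-value identity for $u$ extracted from \eqref{a} by testing against radially symmetric divergence-free bumps, yielding a growth rate slow enough that $R_j^{-1}\int_{A_{R_j}}|u|^3\to 0$ along a suitable subsequence. Once the errors vanish, the energy identity forces $\int_{\mathbb{R}^2}h(|Du|)\,dx=0$; as a shortcut, the boundedness of $u$ obtained en route already permits a direct appeal to the bounded Liouville theorem of \cite{Z}.
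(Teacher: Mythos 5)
There is a genuine gap at the step you yourself flag as ``the main obstacle'': the cubic convective error $R^{-1}\int_{A_R}|u|^3\,dx$ produced by the first-order energy identity cannot be controlled under the sole hypothesis $\int_{\mathbb{R}^2}h(|Du|)\,dx<\infty$. The only growth information on $u$ extractable from finite energy is the Gilbarg--Weinberger bound (Lemma~\ref{L2.3} of the paper), namely $\int_0^{2\pi}|u(r,\theta)|^2\,d\theta=o(\log r)$; this gives at best $\int_{A_R}|u|^3\,dx=o\bigl(R^2(\log R)^{3/2}\bigr)$, so the convective error is $o\bigl(R(\log R)^{3/2}\bigr)$, which may diverge, and no Chebyshev selection of radii can rescue a term carrying a positive power of $R$. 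No circular Poincar\'e inequality or mean-value identity will upgrade this to boundedness of $u$ without already knowing the conclusion, so your closing ``shortcut'' (appealing to the bounded-solution Liouville theorem of \cite{Z}) is circular: boundedness of $u$ is never actually obtained en route. This is precisely why Gilbarg and Weinberger did not argue via a first-order energy identity for Navier--Stokes (they used the vorticity equation and the maximum principle), and why the paper explicitly abandons that route for general $h$.

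The paper instead works one derivative higher: Lemma~\ref{L3.1} establishes a Caccioppoli-type estimate for $W=D^2H(\varepsilon(u))(\varepsilon(\partial_ku),\varepsilon(\partial_ku))$. At that level the convective term contributes $r^{-1}\int_{T_{3r/2}}|Du|^2|u|\,dx$, in which the factor $|Du|^2$ is globally integrable, so its integral over far annuli tends to zero; after subtracting means and applying Poincar\'e, the only surviving $u$-dependence is $r^{-3}\int|u|\,dx$, which the Gilbarg--Weinberger lemma does annihilate. The $\varepsilon$-lemma of Giaquinta--Modica then yields $\int_{\mathbb{R}^2}W\,dx<\infty$, and a second passage to the limit in the same inequality gives $\int W\le\tfrac12\int W$, hence $D^2u\equiv0$; together with $\int_{\mathbb{R}^2}|Du|^2\,dx<\infty$ this forces $u$ to be constant. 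To make a first-order, stream-function argument of your type work one needs an integrability assumption on $u$ itself --- that is the content of Theorem~\ref{t2}, not Theorem~\ref{t1}.
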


Next,  we consider another type of Liouville theorems for the
solutions of\eqref{a}. Recently, Fuchs \cite{Fu1} showed that the
solution is identically zero under the
conditions:\ \ $\int_{\mathbb{R}^2}h(|\varepsilon(u)|)dx<\infty$ and
$\int_{\mathbb{R}^2}|u|^2 dx<\infty$. Now we improve this result and
obtain the following types of Liouville theorems.

Before stating the results, let us introduce some notations. It  follows from $(A1)-(A3)$ that
there is a number $\tau \in (1, 2]$ such that

\[h'(t)\leq C(h(t)^{\frac{1}{\tau}}+1),\]
where $C>0$ is a constant, see Lemma 2.1 in section 2. We denote  $\tau'$ by its H\"{o}lder conjugate exponent, $\tau'=\frac{\tau}{\tau-1}$. Clearly, 
for the Navier-Stokes model, i.e., $h(t)=\frac{\nu}{2}t^2$,   $\tau=2$ and $\tau'=2$,  and for the Ladyzhenskaya model, i.e., $h(t)=\frac{\nu}{2}t^2+\mu t^p$,
 where $p>2$,  $\tau=\frac{p}{p-1}$ and $\tau'= p$.

\begin{theorem}\label{t2}
Suppose that the potential $h$ satisfies the conditions $(A1)-(A3)$. 
Let $u\in C^1(\mathbb{R}^2,\mathbb{R}^2)$ be  an entire weak solution to equation \eqref{a}. 
\begin{itemize}
\item[\rm{(i)}] Suppose that $3/2\le \tau\le 2$. Let $p$ be a number such that $p>1$. If $u\in L^p(\mathbb{R}^2,\mathbb{R}^2)$, then $u$ is the zero vector.
\item[\rm{(ii)}] Suppose that $4/3<\tau<3/2$. Let $p$ be a number such that $p>\tau^\prime$. If $u\in L^p(\mathbb{R}^2,\mathbb{R}^2)$, then $u$ is the zero vector.
\item[\rm{(iii)}] Suppose that $1<\tau\le 4/3$. If $u\in L^{\tau^\prime}(\mathbb{R}^2,\mathbb{R}^2)$, then $u$ is the zero vector.
\end{itemize}
\end{theorem}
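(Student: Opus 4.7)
The plan is to derive a Caccioppoli-type inequality for the ``energy'' $\int_{B_R} h(|\varepsilon(u)|)\,dx$ and to show that it vanishes as $R\to\infty$. Because the weak formulation \eqref{d} admits only solenoidal test functions, I first construct a divergence-free truncation of $u$. Fix a standard $\eta_R\in C_0^\infty(\mathbb R^2)$ with $\eta_R\equiv 1$ on $B_R$, $\mathrm{supp}\,\eta_R\subset B_{2R}$, $|\nabla\eta_R|\le C/R$, and a sufficiently large integer $s$. Since $\divergence(\eta_R^s u)=s\eta_R^{s-1}\,u\cdot\nabla\eta_R$ is supported in the annulus $A_R:=B_{2R}\setminus\overline{B_R}$ and has zero mean, the Bogovski\u{\i} operator produces $w_R\in W^{1,q}_0(A_R,\mathbb R^2)$ with $\divergence w_R=s\eta_R^{s-1}u\cdot\nabla\eta_R$ and
\[
\|\nabla w_R\|_{L^q(A_R)}\le C(q)\|\eta_R^{s-1}u\cdot\nabla\eta_R\|_{L^q(A_R)}\le \frac{C(q)}{R}\|u\|_{L^q(A_R)}
\]
for every $q\in(1,\infty)$, with $C(q)$ independent of $R$ by the scale invariance of $A_R$. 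Then $\varphi_R:=\eta_R^s u-w_R$ is an admissible test function.

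Inserting $\varphi_R$ into \eqref{d} and integrating the convective integral by parts using $\divergence u=0$, a short computation yields
\[
\int_{\mathbb R^2}\eta_R^s\,T(\varepsilon(u)):\varepsilon(u)\,dx = I_1+I_2+I_3+I_4,
\]
where $I_1,I_2$ pair $T(\varepsilon(u))$ respectively with $\eta_R^{s-1}u\otimes\nabla\eta_R$ and with $\varepsilon(w_R)$, while $I_3,I_4$ are the convective remainders $\tfrac{s}{2}\int_{A_R}\eta_R^{s-1}|u|^2u\cdot\nabla\eta_R\,dx$ and $-\int_{A_R}u^ku^i\partial_k w_R^i\,dx$. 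The convexity of $h$ bounds the left-hand side below by $\int\eta_R^s h(|\varepsilon(u)|)\,dx$, and Lemma~2.1 gives $|T(\varepsilon(u))|\le C\bigl(h(|\varepsilon(u)|)^{1/\tau}+1\bigr)$; hence H\"older's inequality with conjugate exponents $(\tau,\tau')$ bounds $I_1$ and $I_2$ by a factor $\bigl(\int_{A_R}h(|\varepsilon(u)|)+1\bigr)^{1/\tau}$ times an $L^{\tau'}(A_R)$-norm of $u$, the Bogovski\u{\i} estimate trading $\|R\,\nabla w_R\|_{L^{\tau'}(A_R)}$ for $\|u\|_{L^{\tau'}(A_R)}$.

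The genuine difficulty is the purely convective block $I_3+I_4$, which must be absorbed using only the global $L^p$ control of $u$. Here I will interpolate the annular $L^3$-norm of $u$ against the global $L^p$ norm and, when the $L^p$ threshold alone is too weak, against the Sobolev exponent $L^{\tau^*}$ with $\tau^*=2\tau/(2-\tau)$, obtained by applying a Korn--Sobolev inequality on the larger ball $B_{4R}$ to the already controlled $\varepsilon(u)\in L^\tau$. Because $\tau^*\ge\tau'$ iff $\tau\ge 4/3$, this comparison dictates the three regimes of the theorem: for $\tau\ge 3/2$ one has $3\le\tau^*$, so plain H\"older interpolation between $L^p$ and $L^{\tau^*}$ closes the estimate for \emph{any} $p>1$; for $4/3<\tau<3/2$ the Sobolev exponent is insufficient and one must require $p>\tau'$; for $1<\tau\le 4/3$ only the critical exponent $p=\tau'$ is feasible, the smallness being secured by absolute continuity of $\int|u|^{\tau'}\,dx$ on annuli escaping to infinity.

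Once every term on the right tends to $0$ as $R\to\infty$, monotone convergence yields $\int_{\mathbb R^2}h(|\varepsilon(u)|)\,dx=0$. By (A1) this forces $\varepsilon(u)\equiv 0$, so $u(x)=b+\omega(-x_2,x_1)$ is an infinitesimal rigid motion; the global $L^p$ hypothesis with $p<\infty$ then imposes $\omega=b=0$, whence $u\equiv 0$. The main obstacle is exactly the case-by-case handling of the convective remainder: $\tau$ simultaneously fixes the Sobolev exponent of $\varepsilon(u)$ and the natural threshold on $p$, and the three alternatives of the theorem correspond to three distinct ways of closing this estimate.
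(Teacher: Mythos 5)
Your overall skeleton --- the Bogovski\u{\i} correction of $\eta^s u$ to obtain an admissible test function, a Caccioppoli inequality for $\int h(|\varepsilon(u)|)$, and the observation that $\tau$ governs the case split --- matches the paper's Lemma 4.1. But the decisive analytic step is missing and, as written, circular. You propose to control the convective block and the annular norms $\|u\|_{L^{\tau'}(A_R)}$, $\|u\|_{L^{3}(A_R)}$ by interpolating against ``the Sobolev exponent $L^{\tau^*}$ \dots obtained by applying a Korn--Sobolev inequality on $B_{4R}$ to the already controlled $\varepsilon(u)\in L^{\tau}$.'' Nothing gives a priori global, or even uniformly local, control of $\varepsilon(u)$: the only quantity in play is the local energy $\int_{B_R}h(|\varepsilon(u)|)$ (which by (A1) controls $\|\varepsilon(u)\|^2_{L^2(B_R)}$, not an $L^\tau$ norm), and this is precisely the unknown on the left-hand side of your inequality. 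Invoking a Sobolev norm of $u$ on the larger ball $B_{4R}$ reintroduces that energy on the right at a larger scale. Closing such a self-referential estimate requires showing that the energy enters the right-hand side with total power strictly less than one after interpolation, together with a hole-filling argument to pass from $\int_{Q_r}\le\delta\int_{Q_{2r}}+\dots$ to an actual bound. The paper does exactly this: the local Ladyzhenskaya inequality with the adaptive Young parameter $\varepsilon=\delta/(C_0(1+\int|u|^2))$ absorbs $\int|u|^4$, the Giaquinta--Modica lemma (Lemma 2.6, with exponents $\beta_j\ge 1$) removes the $\delta$-term, and for $1<p<2$ a bootstrap on $\int_{Q_R}|u|^q$ for large $q$ is run in which the conditions $q/q'<1$ and $q(\tau^{\star}+1)/(2q')<1$, with $\tau^{\star}=(\tau'-2)/(4-\tau')$, are what actually generate the thresholds in (i)--(iii). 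None of this machinery is present in your proposal.

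The second gap is the large-$p$ regime. Your plan is to show every term on the right tends to $0$ as $R\to\infty$ and conclude $\int_{\mathbb{R}^2}h(|\varepsilon(u)|)=0$. For $p\ge 2$ this does not follow from ``plain H\"older interpolation'': a term such as $\frac{1}{R}\int_{A_R}|u|^3$ estimated against $\|u\|_{L^p(A_R)}$ with large $p$ picks up a positive power of $R$, and the vanishing of $\|u\|_{L^p(A_R)}$ as $R\to\infty$ comes with no rate. The paper abandons the vanishing-energy strategy there entirely: it proves locally uniform bounds on $\int h(|\varepsilon(u)|)$ and on the second-order energy $W$ via Lemma 3.1, deduces $u\in L^{\infty}$ by Sobolev embedding, and then invokes the separate Liouville theorem for bounded solutions (Theorem 2.7, taken from \cite{Z}) --- an external ingredient that your direct argument would have to replace. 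A further, smaller point: bounding $|T(\varepsilon(u))|\le C(h(|\varepsilon(u)|)^{1/\tau}+1)$ makes the constant term contribute $R^{2/\tau-1}\|u\|_{L^{\tau'}(A_R)}$, which grows for $\tau<2$; one needs the sharper estimate $h'(t)\le C(h(t)^{1/\tau}+t)$ of Lemma 2.1, as used in (4.7), so that the extra term is quadratic in $|\varepsilon(u)|$ and $|u|$ and can be absorbed.
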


In the setting of stationary Navier-Stokes equations,  we have the following corollary.

\begin{corollary}\label{c3}
 Let $u\in C^1(\mathbb{R}^2,\mathbb{R}^2)$ be an entire weak solution to stationary Navier-Stokes equations in the
plane, i.e.

\[
\renewcommand{\arraystretch}{1.25}
\begin{array}{lll}
 \displaystyle \int_{\mathbb{R}^2} Du\colon D\varphi dx - \displaystyle \int_{\mathbb{R}^2}u^k u^i \partial_k \varphi^i dx = 0
\end{array}
\]
for all $\varphi \in C_0^\infty(\mathbb{R}^2,\mathbb{R}^2)$, $\divergence \varphi=0$, and $u\in L^p(\mathbb{R}^2,\mathbb{R}^2),  p>1$. Then $u$ must be a zero vector.
\end{corollary}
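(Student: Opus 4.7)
The plan is to recognize Corollary \ref{c3} as a direct specialization of Theorem \ref{t2} to the Newtonian potential $h(t)=\frac{1}{2}t^2$; once the hypotheses of that theorem are shown to apply and the relevant value of $\tau$ is identified, the corollary follows immediately.

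First I would verify that $h(t)=\frac{1}{2}t^2$ satisfies $(A1)$--$(A3)$. Strict monotonicity and convexity on $[0,\infty)$ are clear, $h''(0)=1>0$, and $h(t)/t=t/2\to 0$ as $t\to 0^+$, so $(A1)$ holds. The doubling property $(A2)$ is immediate with $a=4$, and for $(A3)$ one checks directly that $h'(t)/t=1=h''(t)$.

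Next I would pin down the exponent $\tau$ supplied by Lemma 2.1. Since $h'(t)=t$ and $h(t)^{1/2}=t/\sqrt{2}$, the bound $h'(t)\le C(h(t)^{1/\tau}+1)$ is satisfied with $\tau=2$ and $C=\sqrt{2}$. Hence $\tau=2$ lies in the range $3/2\le\tau\le 2$ treated by case (i) of Theorem \ref{t2}, so that part of the theorem applies for every $p>1$.

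Finally I would reconcile the two weak formulations. In the corollary the viscous term is $\int_{\mathbb{R}^2} Du\colon D\varphi\,dx$, whereas Theorem \ref{t2} uses $\int_{\mathbb{R}^2} T(\varepsilon(u))\colon\varepsilon(\varphi)\,dx=\int_{\mathbb{R}^2}\varepsilon(u)\colon\varepsilon(\varphi)\,dx$ in the Newtonian case. A short integration by parts using $\divergence\varphi=0$ gives $\int_{\mathbb{R}^2}\partial_j u^i\,\partial_i\varphi^j\,dx=-\int_{\mathbb{R}^2} u^i\,\partial_i(\divergence\varphi)\,dx=0$, which yields $\int_{\mathbb{R}^2} Du\colon D\varphi\,dx=2\int_{\mathbb{R}^2}\varepsilon(u)\colon\varepsilon(\varphi)\,dx$ for divergence-free test fields. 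The two weak formulations therefore coincide up to a harmless rescaling of the viscosity, so Theorem \ref{t2}(i) forces $u\equiv 0$. No substantive obstacle arises at this stage; all the real work is contained in Theorem \ref{t2} itself.
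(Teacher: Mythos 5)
Your proposal is correct and is exactly the route the paper intends: the corollary is stated as an immediate consequence of Theorem \ref{t2}(i), the paper itself noting in the introduction that the Navier--Stokes potential $h(t)=\frac{\nu}{2}t^2$ gives $\tau=\tau'=2$. The only cosmetic point is that choosing $h(t)=t^2$ (i.e.\ $\nu=2$) makes $\int T(\varepsilon(u))\colon\varepsilon(\varphi)\,dx=\int Du\colon D\varphi\,dx$ literally, avoiding even the ``rescaling of viscosity'' remark, since the convective term does not rescale with $\nu$.
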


We first  comment on  the regularity assumption on the solutions. As we known, for general $h$ satisfying $(A1)-(A3)$,
$C^{1,\alpha}$ regularity of solutions is an open problem. But in some special case, such as $h(t)=t^2(1+t)^m, m\geq0$, the solution $u$ belongs
 to the space $C^{1,\alpha}$ \cite{BFZ}. For the further discussion about regularity of the solutions of \eqref{a} the readers are referred to
 see the paper \cite{Fu2}.  Therefore the regularity assumptions on $u$ in  the above theorems are reasonable.

Second, we comment about the proofs of the theorems. For obtaining Theorem  \pref{t1}, in the special case $h(t)=\frac{\nu}{2}t^2$, Gilbarg
 and Weinberger's \cite{GW} approach relies on the following fact: the vorticity  $\omega=\partial_{x_1}u^2-\partial_{x_2}u^1$ satisfies the elliptic 
equation $-\triangle \omega+u\cdot D\omega=0 $ and hence it satisfies the maximum principle. In our setting for equation\eqref{a}, it seems that this
approach does not work. We follow the approach of Fuchs in \cite{Fu1}, see also \cite{FZ} and \cite{Z}. The essential idea is to study the energy 
estimate for the second order derivatives, see Lemma\ref{L3.1} in Section $3$. Then we conclude that $u$ must be a constant vector under
the condition $\int_{\mathbb{R}^2}h(|Du|)dx < \infty$.

The idea for proving \tref{t2} is as follows:  if $p$ is ``suitable'' small,
we directly use the local energy estimate for the first order derivatives  to control the local integral of $|u|^q$, where $q$ is large enough,
and conclude that $\int_{\mathbb{R}^2}|u|^qdx=0$.  If $p$ is ``suitable'' large, we prove the local uniformly finite energy estimate for the second order derivatives , 
from which follows the boundedness of solutions. Then we conclude the proof of \tref{t2} by the Liouville theorem for bounded solutions in  \cite{Z}.

Our notations is standard. Throughout this paper,  the convention of summation with respect to indices repeated twice is used. All constants
 are denoted by the symbol $ C,$ and  $ C $ may change from line to line, whenever it is necessary we will indicate the dependence of
$C$ on parameters. As usual $Q_R(x_0)$ denotes the open square with center $x_0$ and side length $2R,$  and symbols $\colon,$  $\cdotp$ will be
used for the scalar product of matrices and vectors respectively. $|\cdotp|$ denotes the associated Eculidean norms.

Our paper is organized as follows: In section $2$,  we present some
auxiliary results. In section $3$,  we give the proof of Theorem\pref{t1}, and 
in section $4$,  we give the proof of Theorem\pref{t2}.


\section{Auxiliary Results}
\subsection{The properties of function $h$}
The following properties of function $h$ follow from $(A1)-(A3)$, see \cite{Fu1}.

$(i)$\ \ $\mu(t)=\frac{h'(t)}{t}$ is an increasing function.

$(ii)$ \ \ We have $h(0)=h'(0)=0$ and

\begin{equation}\label{1}
 \renewcommand{\arraystretch}{1.25}
\begin{array}{lll}
h(t) \geq \frac{1}{2} h''(0)t^2.
\end{array}
\end{equation}

Moreover,

\begin{equation}\label{2}
\renewcommand{\arraystretch}{1.25}
\begin{array}{lll}
\frac{ h'(t)}{t} \geq \lim\limits_{s\rightarrow 0}\frac{h'(s)}{s}=h''(0)>0.
\end{array}
\end{equation}

$(iii)$\ \ It satisfies the balancing condition, i.e., for some $a>0$, 

\begin{equation}\label{3}
\renewcommand{\arraystretch}{1.25}
\begin{array}{lll}
\frac{1}{a}h'(t)t\leq h(t) \leq th'(t),\ \ \ \ t\geq 0.
\end{array}
\end{equation}

$(iv)$\ \ For an exponent $m \geq 2$ and a constant $C \geq 0$ it holds

\begin{equation}\label{4}
\renewcommand{\arraystretch}{1.25}
\begin{array}{lll}
h(t)\leq C (1+t^m),\ \ \ \ h'(t)\leq C(1+t^m),\ \ \ \ t \geq 0.
\end{array}
\end{equation}

From the assumptions on $h,$ we know the system satisfies the following elliptic condition, $\forall \varepsilon, \sigma \in S^2,$

\begin{equation}\label{5}
\renewcommand{\arraystretch}{1.25}
\begin{array}{lll}
\frac{h'(|\varepsilon|)}{|\varepsilon|}|\sigma|^2 \leq D^2H(\varepsilon)(\sigma,\sigma)\leq h''(|\varepsilon|)|\sigma|^2,
\end{array}
\end{equation}
from which, together with\eqref{2}, it follows that

\begin{equation}\label{6}
\renewcommand{\arraystretch}{1.25}
\begin{array}{lll}
 D^2H(\varepsilon)(\sigma,\sigma)\geq h''(0)|\sigma|^2.
\end{array}
\end{equation}

Finally, we have the following lemma which is taken from \cite{Fu1}.

\begin{lemma}\label{L2.4}
There is a number $\tau \in (1, 2]$ such that
\[
 h'(t)\leq C(h(t)^{\frac{1}{\tau}}+1)
\]
or equivalently

\[
 |DH(\varepsilon(u))|\leq C(H(\varepsilon(u))^{\frac{1}{\tau}}+1)
\]
holds for all $t \geq 0$ and $\varepsilon \in S^2$. Moreover,  we 
have the sharper estimate

\[
 h'(t)\leq C(h(t)^{\frac{1}{\tau}}+t),\ t\geq 0.
\]
\end{lemma}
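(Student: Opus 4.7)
The plan is to combine the doubling condition (A2) with the convexity coming from (A1) to derive a polynomial upper bound on $h$, and then to convert that bound, via a balancing identity linking $h$ and $h'$, into the claimed control on $h'$.

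First, I would iterate (A2) to pin down the growth of $h$. From $h(2t)\le a h(t)$ one gets $h(2^{k})\le a^{k}h(1)$ for every $k\in\mathbb{N}$; for an arbitrary $t\ge 1$ choose $k$ with $2^{k-1}\le t<2^{k}$ and deduce $h(t)\le C t^{m}$ with $m:=\log_{2}a$. Continuity of $h$ together with $h(0)=0$ upgrades this to $h(t)\le C(1+t^{m})$ on all of $[0,\infty)$, and the lower bound (2.1) forces $m\ge 2$.

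Second, I would use convexity to establish the balancing identity. Since $h$ is convex, $h'$ is monotone increasing, so
\[
h(t)=\int_{0}^{t}h'(s)\,ds\le t h'(t), \qquad h'(t)\,t\le\int_{t}^{2t}h'(s)\,ds=h(2t)-h(t)\le a h(t),
\]
giving $h(t)/t\le h'(t)\le a h(t)/t$ for all $t>0$. This is essentially (2.3).

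Third, I would choose the sharp exponent: set $\tau:=m/(m-1)\in(1,2]$, so that $\tau'=m$. For $t\ge 1$, combining the previous two steps,
\[
h'(t)\le a\,\frac{h(t)}{t}=a\,h(t)^{1/\tau}\cdot\frac{h(t)^{1/\tau'}}{t}\le C\,h(t)^{1/\tau}\cdot\frac{t^{m/\tau'}}{t}=C\,h(t)^{1/\tau},
\]
using $h(t)\le Ct^{m}$ and $m/\tau'=1$. For $t\in[0,1]$ continuity of $h'$ with $h'(0)=0$ yields $h'(t)\le C$. Combining the two regimes produces $h'(t)\le C(h(t)^{1/\tau}+1)$; the equivalent matrix formulation is immediate from $|DH(\varepsilon)|=h'(|\varepsilon|)$ and $H(\varepsilon)=h(|\varepsilon|)$. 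For the sharper form, the $C^{2}$-smoothness of $h$ with $h'(0)=0$ and the mean value theorem give $h'(t)\le (\sup_{[0,1]}h'')\,t\le Ct$ on $[0,1]$; replacing $1$ by $t$ in the small-$t$ regime and using $1\le t$ in the large-$t$ regime yields $h'(t)\le C(h(t)^{1/\tau}+t)$.

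No step is a genuine obstacle; the only point requiring care is identifying the critical exponent $\tau=m/(m-1)$, which becomes transparent once the polynomial bound extracted from (A2) is placed alongside the balancing identity. The role of (A3) here is limited to ensuring the convexity/ellipticity needed for (2.3); the sharp exponent itself is determined purely by the doubling constant $a$.
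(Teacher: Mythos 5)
Your proof is correct: the paper itself gives no argument for Lemma~\ref{L2.4}, simply citing \cite{Fu1}, and your derivation --- doubling $\Rightarrow h(t)\le C(1+t^{m})$ with $m=\log_2 a\ge 2$, the balancing inequality $h'(t)\le a\,h(t)/t$ from convexity and doubling, and the interpolation $h(t)^{1/\tau'}\le Ct$ for $t\ge 1$ with $\tau'=m$, $\tau=m/(m-1)\in(1,2]$ --- is exactly the standard route, consistent with the paper's stated values of $\tau$ for the Navier--Stokes and Ladyzhenskaya models. The only cosmetic remark is that (A3) is not actually needed anywhere in this lemma (the balancing inequality \eqref{3} already follows from (A1)--(A2)), so your closing sentence slightly overstates its role.
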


\subsection{Divergence equations and Korn's inequality}

First, we introduce a standard result concerning the ``divergence equations'', see e.g. \cite{Ga1}, \cite{Ga2} or \cite{FS}. For any $R>0$ and $x_0\in\mathbb{R}^2$,
define $ Q_R(z)=\{
(\tilde{x},\tilde{y})\in\mathbb{R}^2 \mid |\tilde{x}-x|<R, |\tilde{y}-y|<R, z=(x,y)$\}.

\begin{lemma}\label{L2.1}
Consider a function $f \in L^q(Q_R(z)), q>1$ such that $\int_{Q_R(z)}f dx=0$. Then there exists a field $
v\in W_0^{1,q}(Q_R(z),\mathbb{R}^2)$ and a constant $C(q)$,  independent of $Q_R(z)$,  such that we have $\divergence v =f$ on $Q_R(z)$ together
 with the estimate
\[
\displaystyle\int_{Q_R (z)} |Dv|^q dx \le C(q) \int_{Q_R (z)} |f|^q dx \, .
\]
\end{lemma}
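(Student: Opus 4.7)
The plan is to reduce the lemma to a fixed reference cube by scaling and then construct $v$ explicitly through the Bogovskii integral operator, whose mapping properties from $L^q$ into $W^{1,q}_0$ follow from classical Calderón--Zygmund theory.

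First, by a translation I may assume $z=0$, and by the rescaling $x=Ry$ with $\widetilde v(y):=R^{-1}v(Ry)$ and $\widetilde f(y):=f(Ry)$ the problem on $Q_R(0)$ is transformed into the analogous problem on the unit cube $Q:=Q_1(0)$. Since
\[
\|D\widetilde v\|_{L^q(Q)}=R^{-2/q}\|Dv\|_{L^q(Q_R)},\qquad \|\widetilde f\|_{L^q(Q)}=R^{-2/q}\|f\|_{L^q(Q_R)},
\]
any $q$-dependent constant obtained on $Q$ transfers to $Q_R(z)$ with no dependence on $R$, which is exactly what the statement asks for.

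Next, I fix a cutoff $\omega\in C_0^\infty(Q)$ with $\int_Q\omega\,dx=1$ and use the fact that $Q$ is star-shaped with respect to every one of its interior points. Following Bogovskii, I set
\[
v(x):=\int_Q f(y)\,N(x,y)\,dy,\qquad N(x,y):=(x-y)\int_1^\infty \omega\bigl(y+s(x-y)\bigr)\,s\,ds.
\]
The kernel $N$ is smooth off the diagonal, has a singularity of order $|x-y|^{-1}$ there, and a direct computation shows $\divergence_x N(x,y)=\delta(x-y)-\omega(x)$ in the distributional sense. Testing against $f(y)\,dy$ and using the hypothesis $\int_Q f\,dy=0$ gives $\divergence v=f$ in $Q$. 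An inspection of the support of $N$ yields that $v$ vanishes near $\partial Q$ whenever $f$ is smooth and compactly supported; the membership $v\in W^{1,q}_0(Q;\mathbb{R}^2)$ for general $f\in L^q(Q)$ then follows by density, provided the gradient estimate below is in force.

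The core step is the estimate $\|Dv\|_{L^q(Q)}\le C(q)\|f\|_{L^q(Q)}$. Differentiating under the integral produces a kernel with a non-integrable singularity of order $|x-y|^{-2}$ at the diagonal, but the singular part can be recast as a principal-value integral plus a bounded pointwise multiple of $f$. The principal-value kernel is homogeneous of degree $-2$ in $x-y$, smooth in the remaining variables, and has vanishing mean on the unit circle, so it defines a variable-coefficient Calderón--Zygmund operator on $\mathbb{R}^2$; the classical Calderón--Zygmund theorem then gives the $L^q$-bound for every $q\in(1,\infty)$, and the scaling reduction transfers it back to $Q_R(z)$ with the same constant. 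The main obstacle is precisely this extraction of the principal-value structure of $Df\mapsto Dv$ and the verification of the cancellation condition needed to invoke Calderón--Zygmund theory; the remaining ingredients---well-posedness of $N$, the support properties of $v$, and the rescaling argument---are routine book-keeping.
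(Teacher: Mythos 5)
The paper offers no proof of this lemma at all—it is stated as a standard fact with references to Galdi and Fuchs--Seregin, where precisely the Bogovskii-operator argument is carried out. Your sketch (translation/dilation to the unit cube, the Bogovskii kernel $N(x,y)=(x-y)\int_1^\infty\omega(y+s(x-y))\,s\,ds$, the distributional identity $\divergence_x N=\delta(x-y)-\omega(x)$ combined with $\int f=0$, and the Calder\'on--Zygmund bound for the principal-value part of $DN$) is exactly that classical proof and is correct, including the scaling bookkeeping that makes $C(q)$ independent of $R$ and $z$.
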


Second, the following lemma is  the classical Korn inequality, see \cite{Te}.

\begin{lemma}\label{L2.2}
There is an absolute constant $C$ such that for all $v\in W_0^{1,2}(Q_R(z),\mathbb{R}^2)$ it holds
\[
\int_{Q_R (z)} |Dv|^2 dx \le C \int_{Q_R (z)} |\varepsilon(v)|^2 dx \,,
\]
and for all $v\in W^{1,2}(Q_R (z),\mathbb{R}^2)$ it holds

\[
\int_{Q_R (z)} |Dv|^2 dx \leq C \biggl( \int_{Q_R (z)} |\varepsilon(v)|^2 dx+\dfrac{1}{R^2}\int_{Q_R (z)}|v|^2 dx \biggr).
\]
\end{lemma}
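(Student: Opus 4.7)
The plan is to prove the two inequalities separately: the Dirichlet form admits a short proof by integration by parts, while the general $W^{1,2}$ form is the classical Korn's second inequality and calls for a duality-based argument.

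For the first inequality I would start from the pointwise identity
\[
2|\varepsilon(v)|^2 = |Dv|^2 + \partial_i v^k \partial_k v^i,
\]
obtained by expanding $\varepsilon(v)=\tfrac12\bigl(Dv+(Dv)^T\bigr)$ and using the index relabelling $\sum_{i,k}(\partial_i v^k)^2=\sum_{i,k}(\partial_k v^i)^2$. Integrating over $Q_R(z)$ and integrating the cross term by parts twice, which is legitimate because $v$ has zero trace, gives
\[
\int_{Q_R(z)} \partial_i v^k \partial_k v^i\,dx = \int_{Q_R(z)} (\divergence v)^2\,dx \ge 0.
\]
Rearranging yields the desired estimate, in fact with the explicit constant $C=2$.

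For the second inequality I would first rescale via $\tilde v(y)=v(Ry)$ to reduce to the unit cube $Q_1$; in dimension two the three integrals transform so that the weight $1/R^2$ in front of $\|v\|_{L^2}^2$ appears automatically. On $Q_1$ I would invoke the second-derivative identity
\[
\partial_j\partial_i v^k = \partial_i \varepsilon_{jk}(v) + \partial_j \varepsilon_{ik}(v) - \partial_k \varepsilon_{ij}(v),
\]
which shows that each $D(\partial_i v^k)$ lies in $H^{-1}(Q_1)$ with norm controlled by $\|\varepsilon(v)\|_{L^2}$. Since trivially $\|\partial_i v^k\|_{H^{-1}} \le \|v\|_{L^2}$, Necas's negative-norm lemma, $\|f\|_{L^2}\le C\bigl(\|f\|_{H^{-1}}+\|Df\|_{H^{-1}}\bigr)$ on a bounded Lipschitz domain, then yields $\|\partial_i v^k\|_{L^2}\le C\bigl(\|v\|_{L^2}+\|\varepsilon(v)\|_{L^2}\bigr)$. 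Squaring, summing over $i,k$, and undoing the rescaling produces the second estimate.

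The main obstacle is Necas's lemma, whose proof is itself nontrivial and usually proceeds either by a Fourier--duality argument on a reference domain followed by a partition of unity, or by the closed-range theorem applied to the divergence operator. An alternative route through contradiction and compactness would build a normalized failing sequence $v_n$, extract a weak $H^1$ limit, and use Rellich to pass to strong $L^2$ convergence; but upgrading this to strong $H^1$ convergence, which is what is needed in order to contradict $\|Dv_n\|_{L^2}=1$, ultimately still leans on a Korn-type inequality. For this reason Lemma \ref{L2.2} is most efficiently quoted from Temam's book, exactly as the paper does.
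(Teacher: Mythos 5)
Your proposal is correct, but it is worth noting that the paper does not prove this lemma at all: it is stated as a known result and quoted directly from Temam's book, so there is no internal argument to compare against. Your first part is complete and self-contained: the identity $2|\varepsilon(v)|^2=|Dv|^2+\partial_i v^k\partial_k v^i$, integration by parts of the cross term to obtain $\int_{Q_R(z)}(\divergence v)^2\,dx\ge 0$ (justified by density of $C_0^\infty$ in $W_0^{1,2}$), and the explicit constant $C=2$ are all right. Your second part correctly identifies the standard route to Korn's second inequality: the rescaling $\tilde v(y)=v(Ry)$ does make all three integrals scale so that the factor $1/R^2$ appears only in front of $\|v\|_{L^2}^2$ (and translation invariance removes the dependence on $z$), the identity $\partial_j\partial_i v^k=\partial_i\varepsilon_{jk}(v)+\partial_j\varepsilon_{ik}(v)-\partial_k\varepsilon_{ij}(v)$ is exactly the right observation, and Ne\v{c}as's negative-norm lemma on the Lipschitz domain $Q_1$ closes the argument. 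You are also right to flag that the real mathematical content is concentrated in Ne\v{c}as's lemma, and that the compactness--contradiction alternative is circular unless one already has a Korn-type input to upgrade weak $H^1$ convergence of the normalized failing sequence to strong convergence of the gradients. What your approach buys over the paper's is transparency about where the difficulty sits and an explicit constant in the $W_0^{1,2}$ case; what the citation buys is brevity, since a fully self-contained proof of the Ne\v{c}as lemma would roughly double the length of the preliminaries for a result that is entirely classical.
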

\subsection{Ladyzhenskaya's inequality and Sobolev-Poincar\'{e}'s inequality}
First, we introduce a local version of the Sobolev inequality in the plane, see \cite{GT}.
\begin{lemma}\label{L2.8}
Let $x_0\in \mathbb{R}^2$, $R>0$, $Q_R(x_0)\subset \mathbb{R}^2$ and $u\in W^{1,2}(Q_R(x_0))$. Then, $\forall q>1$, there exists a constant $C(q)$ depending
only on $q$ such that the following inequality holds
\[
\renewcommand{\arraystretch}{1.25}
\begin{array}{lll}
\biggl(\dfrac{1}{R^2}\displaystyle\int_{Q_R(x_0)}|u|^q dx\biggr)^{\frac{1}{q}}\leq C(q) \biggl\{\biggl(\displaystyle\int_{Q_R(x_0)}|Du|^2dx \biggr)^{\frac{1}{2}} +\biggl(\dfrac{1}{R^2}\displaystyle\int_{Q_R(x_0)}|u|^2 dx\biggr)^{\frac{1}{2}} \biggr\}.
\end{array}
\]
\end{lemma}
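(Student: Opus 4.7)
The plan is to use a standard scaling argument to reduce to the unit cube, apply the classical Sobolev embedding there, and then rescale back. The estimate is scale invariant by design: the factors $R^{-2}$ in front of the $L^q$ and $L^2$ terms, together with the absence of any factor in front of $\int |Du|^2$, make both sides of the inequality transform identically under $u(x)\mapsto u(x_0+Ry)$.

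First, I introduce the rescaled function $\tilde u(y):=u(x_0+Ry)$ defined on $Q_1(0)$. A direct change of variables gives
\[
\int_{Q_1(0)}|\tilde u|^q\,dy = R^{-2}\!\int_{Q_R(x_0)}\!|u|^q\,dx,\qquad
\int_{Q_1(0)}|\tilde u|^2\,dy = R^{-2}\!\int_{Q_R(x_0)}\!|u|^2\,dx,
\]
and, since $D\tilde u(y)=R\,(Du)(x_0+Ry)$,
\[
\int_{Q_1(0)}|D\tilde u|^2\,dy = \int_{Q_R(x_0)}|Du|^2\,dx.
\]

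Second, on the fixed bounded Lipschitz domain $Q_1(0)\subset\mathbb{R}^2$ I invoke the classical Sobolev embedding $W^{1,2}(Q_1(0))\hookrightarrow L^q(Q_1(0))$, valid for every finite $q\ge 2$ in two dimensions, to obtain
\[
\|\tilde u\|_{L^q(Q_1(0))}\le C(q)\bigl(\|D\tilde u\|_{L^2(Q_1(0))}+\|\tilde u\|_{L^2(Q_1(0))}\bigr).
\]
For the remaining range $1<q<2$ the corresponding bound follows even more cheaply from H\"older's inequality applied on the bounded set $Q_1(0)$, namely $\|\tilde u\|_{L^q(Q_1(0))}\le C(q)\|\tilde u\|_{L^2(Q_1(0))}$, which is clearly controlled by the right-hand side above. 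Substituting the three scaling identities into this inequality and taking $q$-th roots produces exactly the claimed estimate.

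There is essentially no serious obstacle here: the lemma is a textbook Sobolev–Poincar\'e inequality, and the only care required is bookkeeping the powers of $R$ in the rescaling so that the constant $C(q)$ is genuinely independent of $R$ and of $x_0$. No use is made of the structural assumptions $(A1)$–$(A3)$, of Korn's inequality, or of the equation \eqref{a}; the statement is purely a geometric/functional-analytic fact about $W^{1,2}$ functions on squares in the plane.
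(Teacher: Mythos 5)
Your proof is correct and complete. The paper itself gives no argument for this lemma---it is stated with a bare citation to Gilbarg--Trudinger---and your scaling reduction to the unit square is the standard way to obtain it; the one point that genuinely matters, namely that $\int|Du|^2\,dx$ is invariant under the rescaling $u\mapsto u(x_0+R\,\cdot)$ in dimension two (so that no power of $R$ appears in front of the gradient term), is correctly identified and verified, and the case $1<q<2$ is properly disposed of by H\"older's inequality on the fixed square.
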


Next, we need a local version of Ladyzhenskaya's inequality. It is an easy consequence of Ladyzhenskaya's inequality. We give the proof here.

\begin{lemma}\label{L2.5}
Suppose $u\in W^{1,2}(Q_R(x_0))$, $Q_R(x_0)\subset \mathbb{R}^2 $. Then there exists a constant $C_0$ independent of $R$, $x_0$ such that

\[
\renewcommand{\arraystretch}{1.25}
\begin{array}{lll}
 \displaystyle\int_{Q_R(x_0)} |u|^4 dx\leq C_0 \biggl\{\displaystyle\int_{Q_R(x_0)} |u|^2 dx\displaystyle\int_{Q_R(x_0)} |Du|^2 dx
 +\frac{1}{R^2}\biggl(\displaystyle\int_{Q_R(x_0)} |u|^2 dx\biggr)^2 \biggr\}.
\end{array}
\]
\end{lemma}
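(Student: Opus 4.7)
The plan is to reduce the stated inequality to the classical (global) Ladyzhenskaya inequality
\[
\int_{\mathbb{R}^2} |w|^4\,dx \leq C\int_{\mathbb{R}^2} |w|^2\,dx \int_{\mathbb{R}^2} |Dw|^2\,dx \quad \text{for } w\in W_0^{1,2}(\mathbb{R}^2),
\]
by constructing a global test function $w$ from $u$ through a localization (cutoff plus extension) procedure.

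First I would rescale. Setting $v(y) := u(x_0 + Ry)$ on $Q_1(0)$, the three integrals involved transform as
$\int_{Q_R} |u|^4 dx = R^2 \int_{Q_1} |v|^4 dy$, $\int_{Q_R} |u|^2 dx = R^2 \int_{Q_1} |v|^2 dy$, and
$\int_{Q_R} |Du|^2 dx = \int_{Q_1} |Dv|^2 dy$. A direct check shows that the claimed inequality is invariant under this scaling, so it suffices to prove it for $R=1$ and $x_0=0$, with a constant $C_0$ independent of everything else.

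Next I would extend $v\in W^{1,2}(Q_1)$ to a function $\tilde v\in W^{1,2}(Q_2)$ by successive reflections across the faces of $Q_1$; this standard extension satisfies
\[
\int_{Q_2}|\tilde v|^p\,dy \leq C\int_{Q_1}|v|^p\,dy \quad (p=2,4), \qquad \int_{Q_2}|D\tilde v|^2\,dy \leq C\int_{Q_1}|Dv|^2\,dy.
\]
Pick a cutoff $\eta\in C_0^\infty(Q_2)$ with $\eta\equiv 1$ on $Q_1$, $0\le\eta\le 1$, and $|D\eta|\le C$. Then $w:=\eta\tilde v\in W_0^{1,2}(Q_2)\subset W_0^{1,2}(\mathbb{R}^2)$, and we apply the classical Ladyzhenskaya inequality to $w$:
\[
\int_{Q_1}|v|^4\,dy \leq \int_{\mathbb{R}^2}|w|^4\,dy \leq C \int_{\mathbb{R}^2}|w|^2\,dy\, \int_{\mathbb{R}^2}|Dw|^2\,dy.
\]
The first factor is bounded by $C\int_{Q_1}|v|^2\,dy$ using the extension estimate, while for the second I would expand $|Dw|^2 \leq 2\eta^2|D\tilde v|^2 + 2|D\eta|^2|\tilde v|^2$ to get $\int|Dw|^2 \leq C\int_{Q_1}|Dv|^2 dy + C\int_{Q_1}|v|^2 dy$. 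Multiplying these out yields exactly the inequality at $R=1$:
\[
\int_{Q_1}|v|^4\,dy \leq C_0\left\{\int_{Q_1}|v|^2\,dy \int_{Q_1}|Dv|^2\,dy + \Big(\int_{Q_1}|v|^2\,dy\Big)^2\right\},
\]
and rescaling back produces the factor $1/R^2$ in front of the second term on the right-hand side, as desired.

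There is no real obstacle here; the only thing one has to be careful about is that the constant produced by the cutoff-plus-extension procedure is genuinely absolute (independent of $R$ and $x_0$), which is automatic once the argument is carried out on the fixed unit cube $Q_1$ and then transported by the scaling $y\mapsto x_0+Ry$. The mild subtlety is making sure the cross term $\int|D\eta|^2|\tilde v|^2$ is absorbed into the $\big(\int|u|^2\big)^2$-term (as opposed to creating a spurious $\int|u|^2$ linear term), which is the reason the classical Ladyzhenskaya inequality—being multiplicative in $\|w\|_2^2\|Dw\|_2^2$—fits this setup perfectly.
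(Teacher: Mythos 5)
Your proposal is correct and follows essentially the same route as the paper: extend $u$ (after rescaling to the unit cube) to a compactly supported $W_0^{1,2}$ function on the doubled cube, apply the classical Ladyzhenskaya inequality, and absorb the extension/cutoff error into the $\bigl(\int|u|^2\bigr)^2$ term before scaling back to produce the $1/R^2$ factor. The only cosmetic difference is that you make the reflection-plus-cutoff construction explicit where the paper simply invokes a bounded extension operator into $W_0^{1,2}(Q_2)$, and you rescale at the start rather than at the end.
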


\begin{proof}
For any $x_0\in \mathbb{R}^2$ and  $v\in W^{1,2}(Q_1(x_0))$, then, there exists an extension $\tilde v \in W_0^{1,2}(Q_2(x_0))$ of $v$ s.t.

\begin{equation}\label{5.1}
\renewcommand{\arraystretch}{1.25}
\begin{array}{lll}
\|\tilde v\|_{L^2(Q_2(x_0))}\leq C \|v\|_{L^2(Q_1(x_0))}
\end{array}
\end{equation}
and
\begin{equation}\label{5.2}
\renewcommand{\arraystretch}{1.25}
\begin{array}{lll}
\|\tilde v\|_{W_0^{1,2}(Q_2(x_0))}\leq C \|v\|_{W^{1,2}(Q_1(x_0))},
\end{array}
\end{equation}
where $C$ is an absolute constant. See \cite{Ev}.

Moreover, by Ladyzhenskaya's inequality (see \cite{Te1}) we have
\begin{equation}\label{5.3}
\renewcommand{\arraystretch}{1.25}
\begin{array}{lll}
\displaystyle\int_{Q_2(x_0)}|\tilde v|^4 dx \leq 2\displaystyle\int_{Q_2(x_0)}|\tilde v|^2dx \displaystyle\int_{Q_2(x_0)}|D\tilde v|^2dx .
\end{array}
\end{equation}

Combing the estimates \eqref{5.1}, \eqref{5.2} and \eqref{5.3},  we obtain that
\[
\renewcommand{\arraystretch}{1.25}
\begin{array}{lll}
\displaystyle\int_{Q_2(x_0)}|\tilde v|^4 dx &\leq  C \biggl(\displaystyle\int_{Q_1(x_0)}|D v|^2dx +\displaystyle\int_{Q_1(x_0)}| v|^2dx\biggr)\displaystyle\int_{Q_1(x_0)}| v|^2dx\\
&\leq  C\biggl\{\displaystyle\int_{Q_1(x_0)}| v|^2dx \displaystyle\int_{Q_1(x_0)}|D v|^2dx  +\biggl(\displaystyle\int_{Q_1(x_0)}| v|^2dx\biggr)^2 \biggr\},
\end{array}
\]
from which, it follows that
\begin{equation}\label{5.4}
\renewcommand{\arraystretch}{1.25}
\begin{array}{lll}
\displaystyle\int_{Q_1(x_0)}| v|^4 dx
\leq  C\biggl\{ \displaystyle\int_{Q_1(x_0)}| v|^2dx\displaystyle\int_{Q_1(x_0)}|D v|^2dx  +\biggl(\displaystyle\int_{Q_1(x_0)}| v|^2dx\biggr)^2 \biggr\}.
\end{array}
\end{equation}

For  $R>0$, let $v(x):=u(Rx)$, thus, $v(x)\in W^{1,2}( Q_1(x_0))$.   Thus \eqref{5.4} holds, and hence,  we end up with 
\[
\renewcommand{\arraystretch}{1.25}
\begin{array}{lll}
\displaystyle\int_{Q_R(x_0)}| u|^4 dx
\leq  C\biggl\{ \displaystyle\int_{Q_R(x_0)}| u|^2dx\displaystyle\int_{Q_R(x_0)}|D u|^2dx  +\dfrac{1}{R^2}\biggl(\displaystyle\int_{Q_R(x_0)}| u|^2dx\biggr)^2 \biggr\}.
\end{array}
\]
This finishes the proof.
\end{proof}
\subsection{A lemma of Gilbarg and Weinberger}

The following result was due to Gilbarg and Weinberger, see Lemma $2.1$ in \cite{GW}.

\begin{lemma}\label{L2.3}
Let $f\in C^1(\mathbb{R}^2,\mathbb{R}^2)$ in $r>r_0>0$ and have finite integral
\[
\displaystyle\int_{r>r_0} |Df|^2 dxdy <\infty.
\]

Then
\[
 \displaystyle\lim_{r\rightarrow \infty}\frac{1}{\text{log} r} \int_0^{2\pi} f(r,\theta)^2 d\theta =0.
\]
\end{lemma}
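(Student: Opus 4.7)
The plan is to pass to polar coordinates and reduce the claim to a nonlinear Gronwall-type inequality for the circular $L^2$ mass
\[
F(r):=\int_0^{2\pi}|f(r,\theta)|^2\,d\theta.
\]
In polar form one has $|Df|^2=|f_r|^2+r^{-2}|f_\theta|^2$ and $dx\,dy=s\,ds\,d\theta$, so the hypothesis reads
\[
\int_{r_0}^{\infty}\!\!\int_0^{2\pi}\Bigl(s|f_r|^2+\tfrac{1}{s}|f_\theta|^2\Bigr)\,d\theta\,ds<\infty.
\]
In particular, for every $\delta>0$ there exists $R_1=R_1(\delta)>r_0$ with $\int_{R_1}^\infty\!\int_0^{2\pi}s|f_r|^2\,d\theta\,ds<\delta$.

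Next I would differentiate, using $F'(r)=2\int_0^{2\pi} f\cdot f_r\,d\theta$, integrate from $R_1$ to $r\ge R_1$, and apply Cauchy--Schwarz first pointwise in $\theta$ and then in $s$ after splitting the weight as $s^{-1/2}\cdot s^{1/2}$. This yields the key estimate
\[
F(r)\le F(R_1)+2\delta^{1/2}\,G(r)^{1/2},\qquad G(r):=\int_{R_1}^r\frac{F(s)}{s}\,ds.
\]

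Finally I would feed this back into the trivial identity $G'(r)=F(r)/r$. A short bootstrap (working with $u=G^{1/2}$ rather than $G$) first yields the crude bound $G(r)\le C\log^2 r$, hence $F(r)=O(\log r)$, so that $L_+:=\limsup_{r\to\infty}F(r)/\log r$ is finite. For any $\varepsilon>0$ one then has $F(s)\le(L_++\varepsilon)\log s$ for all large $s$, and the explicit primitive $\int(\log s)/s\,ds=\tfrac12\log^2 s$ gives $G(r)^{1/2}\le\bigl(\tfrac{L_++\varepsilon}{2}\bigr)^{1/2}\log r+O(1)$. Substituting back into the key estimate, dividing by $\log r$ and sending $r\to\infty$, $\varepsilon\to0^+$ produces $L_+\le 2\delta^{1/2}(L_+/2)^{1/2}$, i.e.\ $L_+\le 2\delta$. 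Since $\delta>0$ is arbitrary, $L_+=0$, which is the desired conclusion.

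The main obstacle is precisely this last bootstrap. The inequality $F\le C_1+C_2 G^{1/2}$ together with $G'=F/r$ is not Lipschitz in $G$, and a direct integration of $G'\le(C_1+C_2 G^{1/2})/r$ only produces $F=O(r)$. The correct move is to pass to the square root $G^{1/2}$ (or to run the limsup argument above), which captures the sharp logarithmic growth with a coefficient proportional to $\delta^{1/2}$; it is only after this sharpening that the free small parameter $\delta$ can be sent to $0^+$ to kill the limit.
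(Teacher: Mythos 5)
Your argument is correct. Note first that the paper does not prove this lemma at all: it is quoted verbatim from Gilbarg and Weinberger (Lemma 2.1 of \cite{GW}), so the relevant comparison is with their proof. Your key estimate is exactly theirs: write the hypothesis in polar coordinates, use $F'(r)=2\int_0^{2\pi}f\cdot f_r\,d\theta$, apply Cauchy--Schwarz in $\theta$ and then in $s$ with the weight split $s^{-1/2}\cdot s^{1/2}$, and exploit that the tail Dirichlet integral $\int_{R_1}^\infty\int_0^{2\pi}s|f_s|^2\,d\theta\,ds$ can be made smaller than any $\delta$. Where you diverge is in closing the resulting integral inequality: you keep the quadratic relation $F\le F(R_1)+2\delta^{1/2}G^{1/2}$ with $G'=F/r$ and resolve it by a two-stage limsup bootstrap ($G=O(\log^2 r)$, hence $L_+<\infty$, hence $L_+\le 2\delta$). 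This works, and it has the minor virtue of never dividing by $F^{1/2}$, but it is a detour. The classical shortcut is to linearize \emph{before} integrating: from $F'\le 2F^{1/2}\bigl(\int_0^{2\pi}|f_r|^2\,d\theta\bigr)^{1/2}$ one gets $\frac{d}{dr}F^{1/2}\le\bigl(\int_0^{2\pi}|f_r|^2\,d\theta\bigr)^{1/2}$, and a single Cauchy--Schwarz in $r$ yields $F^{1/2}(r)\le F^{1/2}(R_1)+\bigl(\delta\log(r/R_1)\bigr)^{1/2}$ directly, whence $\limsup_{r\to\infty}F(r)/\log r\le\delta$ with no bootstrap. (The set where $F$ vanishes causes no trouble, e.g.\ by replacing $F$ with $F+\epsilon$ and letting $\epsilon\to 0$.) So: correct, same engine, but you can delete the entire last paragraph of your argument by taking the square root one step earlier.
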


\subsection{A lemma of  Giaquinta and Modica }

The following $\varepsilon$-lemma goes back to the work of Giaquinta and Modica \cite{GM}. Recently,  a generalized version of $\varepsilon$-lemma 
was given in  \cite{FZ}. For proving our results, we need the following version of $\varepsilon$-lemma.

\begin{lemma}\label{L2.6}
Let $f,$ $f_1, \ldots,f_l$ denote non-negative functions from the space $L^1_{loc} (\mathbb{R}^2).$ Suppose further
that we are given exponents $\alpha_1, \ldots, \alpha_l \geq 0,$  $\beta_1, \ldots, \beta_l \geq 1.$ For any $x_0\in \mathbb{R}^2$, $Q=Q_{2R}(x_0)$,
we can find $\delta_0$ depending on $\alpha_1, \ldots, \alpha_l \geq 0 $  as follows: if for $\delta \in (0, \delta_0)$ it is possible to calculate a 
constant $C (\delta) > 0$ such that the inequality

\begin{equation}\label{2.11}
\renewcommand{\arraystretch}{1.25}
\begin{array}{lll}
\displaystyle\int_{Q_r (z)} f dx \le \delta \int_{Q_{2r} (z)} f dx + C (\delta) \sum^l_{j = 1} r^{-\alpha_j} \biggl(\int_{Q_{2r}(z)} f_j dx\biggr)^{\beta_j}
\end{array}
\end{equation}
holds for any choice $Q_{2r}(z)\subset Q_{2R}(x_0)$. Then there is a
constant $C$ independent of $\delta$ and $R$ with the property
\[
\int_{Q_R (x_0)} f dx \leq  C \sum^l_{j = 1} R^{-\alpha_j} \biggl(\int_{Q_{2R}(x_0)} f_j dx\biggr)^{\beta_j}.
\]
\end{lemma}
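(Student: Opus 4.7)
The plan is to iterate the hypothesis across a nested sequence of concentric squares whose radii increase geometrically from $R$ to $2R$. I would set $R_k := R(2 - 2^{-k})$ for $k=0,1,2,\dots$, so that $R_0 = R$ and $R_k\to 2R$, with shell width $R_{k+1}-R_k = R\cdot 2^{-k-1}$. This dictates the scale $r_k := R\cdot 2^{-k-2}$ of the small covering cubes: for any $z\in Q_{R_k}(x_0)$, the doubled cube $Q_{2r_k}(z)$ is contained in $Q_{R_{k+1}}(x_0) \subset Q_{2R}(x_0)$, so the hypothesis \eqref{2.11} applies to each $Q_{r_k}(z)$.

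Next I would cover $Q_{R_k}(x_0)$ by a grid of such cubes $\{Q_{r_k}(z_i)\}$, using $O((R_k/r_k)^2)$ of them, arranged so that the doubled family $\{Q_{2r_k}(z_i)\}$ has a uniformly bounded overlap $M$ (a dimension-dependent constant). Summing \eqref{2.11} over this cover, using $\sum_i \int_{Q_{r_k}(z_i)} f\,dx \ge \int_{Q_{R_k}(x_0)} f\,dx$ on the left and the bounded-overlap bound $\sum_i \int_{Q_{2r_k}(z_i)} g\,dx \le M\int_{Q_{R_{k+1}}(x_0)} g\,dx$ on the right, together with the elementary inequality $\sum_i a_i^{\beta_j} \le (\sum_i a_i)^{\beta_j}$ (valid for $a_i\ge 0$ and $\beta_j\ge 1$), produces a one-dimensional recursion of the form
\[
\phi_k \le \delta M\, \phi_{k+1} + C'(\delta)\sum_{j=1}^l 2^{(k+2)\alpha_j} R^{-\alpha_j} E_j,
\]
where $\phi_k := \int_{Q_{R_k}(x_0)} f\,dx$ and $E_j := \bigl(\int_{Q_{2R}(x_0)} f_j\,dx\bigr)^{\beta_j}$.

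I would then iterate this recursion $K$ times to obtain
\[
\phi_0 \le (\delta M)^K \phi_K + C'(\delta)\sum_{j=1}^l R^{-\alpha_j} 2^{2\alpha_j} E_j \sum_{k=0}^{K-1} (\delta M\cdot 2^{\alpha_j})^k,
\]
and let $K\to\infty$. Since $\phi_K \le \int_{Q_{2R}(x_0)} f\,dx < \infty$ by $f\in L^1_{\mathrm{loc}}$, the tail $(\delta M)^K\phi_K$ vanishes provided $\delta M < 1$, and the geometric sum over $k$ converges precisely when $\delta M\cdot 2^{\alpha_j} < 1$ for every $j$. Choosing $\delta_0 := (M\cdot 2^{\max_j \alpha_j})^{-1}$ — this being the only place where $\delta_0$ must depend on the exponents $\alpha_j$ — and taking $\delta\in(0,\delta_0)$ yields the desired estimate $\int_{Q_R(x_0)} f\,dx \le C \sum_{j=1}^l R^{-\alpha_j}\bigl(\int_{Q_{2R}(x_0)} f_j\,dx\bigr)^{\beta_j}$.

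The main obstacle is balancing two competing geometric factors: the scaling $r_k^{-\alpha_j}\asymp R^{-\alpha_j}\,2^{k\alpha_j}$ grows with $k$ because the covering cubes shrink, whereas the iteration supplies only the contracting factor $(\delta M)^k$. The geometry of the nested radii $R_k = R(2-2^{-k})$ and covering scales $r_k = R\cdot 2^{-k-2}$, together with the threshold $\delta_0$, must be calibrated precisely so that the contraction dominates the growth uniformly in $j$. Everything else — the grid cover, the overlap count, and the inequality $\sum_i a_i^\beta \le (\sum_i a_i)^\beta$ — is standard bookkeeping, and the local integrability assumption on $f$ is used only to kill the tail $(\delta M)^K\phi_K$ in the limit $K\to\infty$.
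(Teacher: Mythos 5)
Your proof is correct and follows the same covering-plus-geometric-iteration scheme that the paper relies on: the paper itself only reduces the case $\beta_j\ge 1$ to $\beta_j=1$ via the trivial bound $(\int_{Q_{2r}(z)}f_j\,dx)^{\beta_j}\le(\int_{Q_{2R}(x_0)}f_j\,dx)^{\beta_j-1}\int_{Q_{2r}(z)}f_j\,dx$ and then invokes Lemma 3.1 of \cite{FZ}, whose proof is precisely your nested-squares iteration with $R_k=R(2-2^{-k})$, bounded-overlap covers at scale $r_k$, and the requirement $\delta M\,2^{\alpha_j}<1$. The only cosmetic difference is that you absorb the exponents $\beta_j\ge 1$ directly through the superadditivity $\sum_i a_i^{\beta_j}\le\bigl(\sum_i a_i\bigr)^{\beta_j}$ when summing over the cover rather than performing the paper's pre-reduction; both work, and your calibration $\delta_0=(M\,2^{\max_j\alpha_j})^{-1}$ correctly identifies the one place where $\delta_0$ must depend on the $\alpha_j$.
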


\begin{remark}
When $\beta_j=1,  j=1, 2,\ldots,l$, Lemma\pref{L2.6} is reduced to Lemma $3.1$ of \cite{FZ}. Notice we have the trivial inequality 
$(\int_{Q_{2r}(z)} f_j dx)^{\beta_j}\leq (\int_{Q_{2R}(x_0)} f_j dx)^{\beta_j-1}\int_{Q_{2r}(z)} f_j dx$. In this way, we can reduce the 
assumption\eqref{2.11} to that of Lemma $3.1$ of \cite{FZ}. Then the proof of Lemma\pref{L2.6} is exactly the same as that of Lemma $3.1$ of \cite{FZ}.
\end{remark}

\subsection{A Liouville theorem}
We need the following Liouville theorem, Theorem $1$ in \cite{Z}, for the equation \eqref{a},
to prove Theorem\pref{t2}. 

\begin{theorem}\label{L2.7}
Suppose $u\in C^1(\mathbb{R}^2,\mathbb{R}^2) \bigcap L^{\infty}(\mathbb{R}^2,\mathbb{R}^2)$ be an entire
 weak solution of\eqref{a}. Then $u$ is a constant vector.
\end{theorem}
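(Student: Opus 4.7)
The plan is to show that boundedness of $u$ forces the global energy $\int_{\mathbb{R}^2} h(|Du|)\,dx$ to be finite, at which point Theorem \ref{t1} immediately yields that $u$ is constant. The task therefore reduces to establishing a uniform-in-$R$ Caccioppoli-type estimate
\[
\int_{Q_R(0)} h(|\varepsilon(u)|)\,dx \leq C(\|u\|_\infty, h),
\]
then sending $R \to \infty$ and invoking Korn (Lemma \ref{L2.2}) together with the trivial $L^2$ bound coming from $|u|\leq M$ to pass from $\varepsilon(u)$ to $Du$.

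For the Caccioppoli, fix a cutoff $\eta \in C_0^\infty(Q_{2R})$ with $\eta \equiv 1$ on $Q_R$ and $|\nabla \eta| \leq C/R$. Test functions of the form $\eta^2 u$ are not divergence-free, so I correct via the Bogovskii construction: let $c_R$ be the mean of $u$ over the annulus $A_R = Q_{2R}\setminus \overline{Q}_R$ and use Lemma \ref{L2.1} on $A_R$ to produce $z \in W_0^{1,\tau'}(A_R)$ with $\divergence z = \divergence(\eta^2(u-c_R))$ and $\|Dz\|_{L^{\tau'}(A_R)} \leq CR^{-1}\|u-c_R\|_{L^{\tau'}(A_R)}$. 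Then $\varphi := \eta^2(u-c_R)-z$ is admissible in \eqref{d}. Inserting $\varphi$ and using the balancing condition \eqref{3} together with the ellipticity \eqref{5}, the stress term on the left controls $\int \eta^2 h(|\varepsilon(u)|)\,dx$, while the remaining stress contributions on the right are handled by Lemma \ref{L2.4} and Young's inequality: the bound $|T(\varepsilon(u))| \leq C h(|\varepsilon(u)|)^{1/\tau}+C$ splits into a piece absorbed into the left-hand side and a remainder of order $R^{-\tau'}\int_{A_R}|u-c_R|^{\tau'}\,dx$ plus a lower-order $O(R^{2-\tau'})$ term.

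The convective term $\int u^k u^i \partial_k \varphi^i\,dx$ is the subtle part. After one integration by parts using $\divergence u = 0$, the contribution from $\eta^2(u-c_R)$ collapses to $\tfrac{1}{2}\int u\cdot\nabla(\eta^2)\,|u-c_R|^2\,dx$: the subtraction of the constant $c_R$ is exactly what kills the naive cubic term $\tfrac{1}{2}\int u\cdot\nabla(\eta^2)|u|^2\,dx$ that would otherwise grow like $\|u\|_\infty^3\,R$ and destroy any hope of a uniform bound. Combined with $\|u\|_\infty \leq M$ and Poincar\'e on $A_R$, $\int_{A_R}|u-c_R|^2 \leq CR^2\int_{A_R}|Du|^2$, this cubic piece becomes $\leq CM^2\int_{A_R}|Du|^2\,dx$, which via \eqref{1} and Korn is controlled by $\int_{A_R} h(|\varepsilon(u)|)\,dx$; the $z$-contribution to the convective term is treated similarly using H\"older and the Bogovskii estimate.

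Assembling these inequalities and applying the $\varepsilon$-lemma (Lemma \ref{L2.6}) to absorb the annular integrals into the left-hand side yields the desired $R$-independent bound. The principal obstacle is precisely the cubic convective term: without subtracting the annular mean $c_R$ the naive Caccioppoli produces a $C(M)R$ error that cannot be beaten, and even with the subtraction one must simultaneously control the Bogovskii corrector $z$ and tune the Young-inequality exponents against the anisotropic stress growth $h(|\varepsilon|)^{1/\tau}$ dictated by Lemma \ref{L2.4}. A supporting technical point is that the Poincar\'e gain $R^{-1}$ must always outweigh the $R^{-\tau'}$ loss in the stress estimate, which requires $\tau' \geq 2$ --- exactly the content of the hypothesis $\tau \in (1,2]$.
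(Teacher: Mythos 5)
First, note that the paper does not actually prove Theorem \ref{L2.7}: it is imported verbatim as Theorem 1 of \cite{Z}, where the argument is based on locally uniform second-order energy estimates rather than on a global Caccioppoli bound. So your proposal is not a variant of the paper's proof; it is an attempt at a new proof of a quoted black-box result, and it has a genuine gap.

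The gap is in your treatment of the cubic convective term, and it is fatal. After subtracting the annular mean $c_R$ you correctly reduce that term to $\tfrac12\int u\cdot\nabla(\eta^2)\,|u-c_R|^2\,dx$, which is bounded by $\tfrac{CM}{R}\int_{A_R}|u-c_R|^2\,dx$. But the Poincar\'e inequality on the annulus $A_R=Q_{2R}\setminus\overline{Q}_R$, whose width is comparable to $R$, carries the scaling constant $CR^2$; hence $\tfrac{CM}{R}\int_{A_R}|u-c_R|^2\,dx\le CMR\int_{A_R}|Du|^2\,dx$, not $CM^2\int_{A_R}|Du|^2\,dx$ as you claim --- you have lost a factor of $R$. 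A term of the form $CMR\int_{A_R}|Du|^2\,dx$ cannot be absorbed: by Korn (Lemma \ref{L2.2}) and \eqref{1} it is comparable to $CMR\int_{Q_{2R}}h(|\varepsilon(u)|)\,dx + CM^3R$, and the $\varepsilon$-lemma (Lemma \ref{L2.6}) requires the coefficient of the right-hand energy term to be a fixed small $\delta$, whereas here it diverges with $R$. Even the crude bound $|u-c_R|\le 2M$ leaves $CM^3R$, which destroys any $R$-independent estimate. The same difficulty is visible in the paper's own Lemma \ref{L4.1}: with $|u|\le M$ its right-hand side is of order $M^4R^2$, consistent with the fact that a bounded solution has, a priori, energy growing like $R^2$ on $Q_R$.

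More structurally, the strategy ``bounded $\Rightarrow$ finite global energy $\Rightarrow$ apply Theorem \ref{t1}'' is too strong to be true by Caccioppoli-type manipulations. If it worked, the bounded-solution Liouville theorem for the $2D$ stationary Navier--Stokes equations would be an immediate corollary of Gilbarg--Weinberger \cite{GW}; in reality that theorem required the vorticity/maximum-principle machinery of \cite{GW} and \cite{KNSS}, and in the generalized Newtonian setting it is precisely the content of the separate paper \cite{Z}. A bounded, non-constant, divergence-free $C^1$ field generically satisfies $\int_{Q_R}|\varepsilon(u)|^2\,dx\sim R^2$, and nothing in your argument exploits the equation strongly enough to exclude this for solutions. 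To repair the proof you would need an entirely different mechanism (decay of the gradient, oscillation estimates, or the second-order energy approach of \cite{Z}), not a sharper tuning of the Young and Bogovskii exponents.
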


\section{ Proof of Theorem\ref{t1}}

 In  view of $u\in C^1(\mathbb{R}^2,\mathbb{R}^2)$ and the elliptic condition\eqref{5}, by standard difference quotient technique we
 can prove  that $u\in W_{loc}^{2,2}(\mathbb{R}^2,\mathbb{R}^2)$. See \cite{Fu1}, \cite{Z}. The Proof of
 Theorem\ref{t1} is divided into the following three lemmas.

\begin{lemma}\label{L3.1}
Let $u\in C^1(\mathbb{R}^2,\mathbb{R}^2)$ be an entire weak solution of\eqref{a} and satisfy the
  condition $\int_{\mathbb{R}^2}h(|Du|) dx<\infty$. Then, for any $x_0\in \mathbb{R}^2$, $R>0,$ the following energy estimate holds
\begin{equation}\label{3.1}
\renewcommand{\arraystretch}{1.25}
\begin{array}{lll}
 \displaystyle\int_{Q_R(x_0)} W dx\leq & C \biggl\{\displaystyle\frac{1}{R^2}\int_{Q_{2R}(x_0)} h(|\varepsilon(u)|)dx
 +\displaystyle\frac{1}{R^2}\int_{Q_{2R}(x_0)} |Du|^2 dx\\
&+\biggl(1+\displaystyle\frac{1}{R^{2m}} \biggr)
+\displaystyle\frac{1}{R^3}\int_{Q_{2R}(x_0)} |u|dx \biggr\},
\end{array}
\end{equation}
where $W= D^2H(\varepsilon(u))  (\varepsilon(\partial_ku), \varepsilon(\partial_ku))$,\  $m>0$, $C$ is a constant independent of $x_0$, $R$.
\end{lemma}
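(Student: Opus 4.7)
The strategy is a Caccioppoli-type estimate for the second-order derivatives: differentiate the weak form \eqref{d} in a coordinate direction $x_k$, test against a divergence-free cutoff of $\partial_k u$, and absorb bad terms via the $\varepsilon$-lemma (Lemma \ref{L2.6}). Since $u\in W^{2,2}_{\mathrm{loc}}$ by standard difference quotient arguments, replacing $\varphi$ in \eqref{d} by $\partial_k\varphi$ and integrating by parts yields, for every divergence-free $\varphi\in C_0^\infty(Q_{2R})$,
\[
\int \partial_k T(\varepsilon(u))\colon \varepsilon(\varphi)\,dx = \int \partial_k(u^\ell u^i)\,\partial_\ell \varphi^i\,dx.
\]

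Fix a cutoff $\eta\in C_0^\infty(Q_{2R})$ with $\eta\equiv 1$ on $Q_R$ and $|D\eta|\leq C/R$. For each $k$, apply Lemma \ref{L2.1} to construct $w_k\in W_0^{1,2}(Q_{2R})$ with $\divergence w_k=\divergence(\eta^2\partial_k u)=2\eta\,D\eta\cdot \partial_k u$ and $\|Dw_k\|_{L^2}\leq CR^{-1}\|Du\|_{L^2(Q_{2R})}$; then $\varphi_k:=\eta^2\partial_k u-w_k$ is an admissible divergence-free test function that also eliminates the pressure. Substituting, summing over $k$, and using the bilinear Cauchy--Schwarz identity $|\partial_k T|\leq h''(|\varepsilon(u)|)^{1/2}W_k^{1/2}$ combined with Lemma \ref{L2.4}, I isolate the principal term $\int \eta^2 W\,dx$ on the elliptic side and bound the cross terms carrying $D\eta$ and the corrector $w_k$ by Young's inequality. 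These contribute $\delta\int\eta^2 W$ (absorbable) together with $CR^{-2}\int h(|\varepsilon(u)|)\,dx$, $CR^{-2}\int|Du|^2\,dx$, and a constant $1+R^{-2m}$ stemming from the ``$+t$'' residual provided by Lemma \ref{L2.4}.

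For the convective contribution $\sum_k\int\partial_k(u^\ell u^i)\,\partial_\ell\varphi_k^i\,dx$, I expand $\partial_k(u^\ell u^i)=(\partial_k u^\ell)u^i+u^\ell\partial_k u^i$ and exploit $\divergence u=0$ repeatedly to transfer as many derivatives as possible onto $\eta$. The ``transport'' piece $\sum_k\int u^\ell(\partial_\ell\partial_k u^i)\eta^2(\partial_k u^i)\,dx$ telescopes via $\tfrac12\partial_\ell|Du|^2$ and divergence-freeness to $-\int u^\ell\eta(\partial_\ell\eta)|Du|^2\,dx$; the remaining cubic-in-$Du$ piece together with the $D\eta$ cross terms and the corrector contribution $\int\partial_k(u^\ell u^i)\partial_\ell w_k^i\,dx$ is controlled via the local Ladyzhenskaya inequality (Lemma \ref{L2.5}), which gives $\int\eta^2|Du|^3\,dx\leq \delta\int\eta^2 W+C(\delta)(R^{-2}\int |Du|^2\,dx)^2$ and similar bounds after Korn's inequality (Lemma \ref{L2.2}). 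One further integration by parts on the $\int \eta|u||Du|^2$-type term, moving one derivative off $|Du|^2$ onto $\eta$, produces the $R^{-3}\int_{Q_{2R}}|u|\,dx$ piece rather than an $L^2$ bound on $u$. Once every uncontrolled term is of the form $\delta\int\eta^2 W$, I choose $\delta$ small to absorb, and the resulting Caccioppoli-type inequality fits the hypotheses of Lemma \ref{L2.6}, whose dyadic iteration yields \eqref{3.1}.

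The main obstacle I expect is the last step: extracting only the $L^1$-type bound $R^{-3}\int|u|\,dx$ on the velocity from a convective term that is cubic in $(u,Du,D^2u)$, while keeping the coefficient in front of $\int\eta^2 W$ strictly below $1$ after all the Young's-inequality splits. This requires careful bookkeeping of how many derivatives can be transferred to $\eta$ using both $\divergence u=0$ and the specific 2D structure provided by Lemma \ref{L2.5}.
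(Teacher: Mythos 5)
Your overall architecture (differentiate the equation, test with a divergence-corrected cutoff of $\partial_k u$, derive a Caccioppoli-type inequality for $W$, and iterate with Lemma~\ref{L2.6}) matches the paper's, which imports exactly such a second-order estimate, \eqref{3.2}, from \cite{Z}. However, the two steps you yourself flag as delicate are precisely where your argument has genuine gaps. First, the term $\frac{1}{r}\int_{T_{\frac{3}{2}r}(x)}|u|\,|Du|^2\,dx$ cannot be reduced to $R^{-3}\int|u|\,dx$ by ``moving one derivative off $|Du|^2$ onto $\eta$'': there is no derivative sitting on $|Du|^2$ to move, and integration by parts cannot decouple $|u|$ from the quadratic factor. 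The paper's mechanism is entirely different: it subtracts the averages $A=\dashint_{T}|Du|^2\,dx$ and $B=\dashint_{T}u\,dx$ over the annulus, applies Young's inequality and then Poincar\'e and Sobolev--Poincar\'e, arriving at \eqref{3.9}, whose three output terms are $\varepsilon\bigl(\int_T|Du|^2\bigr)\int_T|D^2u|^2$, $\varepsilon^{-1}\int_T|Du|^2$, and $\frac{C}{r^3}\bigl(\int_T|Du|^2\bigr)\int_T|u|$. The hypothesis $\int_{\mathbb{R}^2}h(|Du|)\,dx<\infty$ (hence $\int_{\mathbb{R}^2}|Du|^2\,dx\le M<\infty$ by \eqref{1}) is then indispensable: it is what makes the coefficient $\varepsilon\int_T|Du|^2\le\varepsilon M$ in front of $\int W$ uniformly small so that absorption is possible, and what turns $\frac{C}{r^3}\int_T|Du|^2\int_T|u|$ into $\frac{CM}{r^3}\int|u|$. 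Your proposal never invokes this hypothesis, and without it the coefficient in front of $\int\eta^2W$ cannot be pushed below $1$ uniformly over all subcubes, so the $\varepsilon$-lemma does not apply.

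Second, your treatment of $\int h'(|\varepsilon(u)|)^2|D\eta|^2\,dx$ via Lemma~\ref{L2.4} is insufficient: squaring $h'(t)\le C(h(t)^{1/\tau}+t)$ gives $h'(t)^2\le C(h(t)^{2/\tau}+t^2)$ with $2/\tau\in[1,2)$, and the superlinear power $h^{2/\tau}$ is not controlled by $\int h$ alone. The paper instead uses a truncation at level $L=\varepsilon^{-1/2}r^{-1}$ (estimate \eqref{3.5}, inherited from \cite{Z}), which produces $h'(\varepsilon^{-1/2}r^{-1})^2\le C(1+r^{-2m})$ --- this, and not the ``$+t$'' residual of Lemma~\ref{L2.4}, is the source of the $(1+R^{-2m})$ term in \eqref{3.1} --- together with $\frac{C}{r^2}\bigl(\int h\bigr)^2$ and a further term $\varepsilon\int h\cdot\int W$ that is again absorbed only because the global energy is finite. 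A minor additional remark: the cubic term $\int\eta^2|Du|^3\,dx$ that you propose to handle by Ladyzhenskaya does not appear in the base estimate \eqref{3.2}; for a divergence-free field in the plane the relevant contraction $\operatorname{tr}\bigl(Du\,Du\,(Du)^{T}\bigr)$ vanishes by Cayley--Hamilton, although your route would also produce terms admissible in Lemma~\ref{L2.6} after using the global bound on $\int|Du|^2$.
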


\begin{proof}
For any cut-off function $\eta\in C_0^{\infty}(\mathbb{R}^2)$, $0\leq\eta\leq 1$,  the following estimate is obtained in \cite{Z}, see $(3.9)$ of
\cite{Z},

\begin{equation}\label{3.2}
 \renewcommand{\arraystretch}{1.25}
\begin{array}{lll}
 \displaystyle\int_{\mathbb{R}^2} & D^2H(\varepsilon(u))  (\varepsilon(\partial_ku), \varepsilon(\partial_ku))\eta^2dx\\
&\leq C\biggl\{ \displaystyle\int_{\mathbb{R}^2} h(|\varepsilon(u)|)|D\eta|^2 dx+
  \displaystyle\int_{\mathbb{R}^2} h'(|\varepsilon(u)|)^2(|D\eta|^2+|D^2\eta|) dx\\
&+ \displaystyle\int_{\mathbb{R}^2}|Du|^2(|D\eta|^2+|D^2\eta|)dx + \displaystyle\int_{\mathbb{R}^2}|Du|^2|u||D\eta|dx \biggr\}.
\end{array}
\end{equation}

Now, for any $x\in Q_{2R}(x_0), r>0$, $Q_{2r}(x)\subset Q_{2R}(x_0)$ and  $\eta\in C_{0}^{\infty}(Q_{\frac{3}{2}r}(x))$ satisfying
$\eta=1$ in $Q_r(x)$ and $0\leq\eta\leq 1$, $|D\eta|\leq \frac{4}{r}$, $|D^2\eta|\leq \frac{16}{r^2}$, we deduce from\eqref{3.2} that
\begin{equation}\label{3.3}
\renewcommand{\arraystretch}{1.25}
\begin{array}{lll}
 \displaystyle\int_{Q_r(x)} W dx
&\leq C\biggl\{ \displaystyle\frac{1}{r^2}\int_{Q_{\frac{3}{2}r}(x)} h(|\varepsilon(u)|)dx+
  \displaystyle\frac{1}{r^2}\int_{Q_{\frac{3}{2}r}(x)} h'(|\varepsilon(u)|)^2 dx\\
&+ \displaystyle\frac{1}{r^2}\int_{Q_{\frac{3}{2}r}(x)}|Du|^2dx + \displaystyle\frac{1}{r}\int_{T_{\frac{3}{2}r}(x)}|Du|^2|u|dx \biggr\},
\end{array}
\end{equation}
where $T_{\frac{3}{2}r}(x)=Q_{\frac{3}{2}r}(x)\setminus \overline{Q_r(x)}$.

For the term $\frac{1}{r^2}\int_{Q_{\frac{3}{2}r}(x) }h'(|\varepsilon(u)|)^2 dx$,  we have the following estimate,for $L>0$,
see $(3.16)$ in \cite{Z},
\begin{equation*}\label{3.4}
\renewcommand{\arraystretch}{1.25}
\begin{array}{lll}
\displaystyle \frac{1}{r^2}\int_{Q_{\frac{3}{2}r}(x)}h'(|\varepsilon(u)|)^2 dx&\leq \displaystyle C  h'(L)^2+C\frac{1}{L^2}\frac{1}{r^4}
 \biggl(\int_{Q_{2r}(x)}h(|\varepsilon(u)|) dx\biggr)^2\\
&+\displaystyle C \frac{1}{r^2}\frac{1}{L^2} \int_{Q_{2r}(x)}h(|\varepsilon(u)|) dx \int_{Q_{2r}(x)}W dx.
\end{array}
\end{equation*}

Choosing $L=\frac{1}{\varepsilon^{\frac{1}{2}}r}$, $\varepsilon<1$, we have that
\begin{equation}\label{3.5}
\renewcommand{\arraystretch}{1.25}
\begin{array}{lll}
\displaystyle \frac{1}{r^2}\int_{Q_{\frac{3}{2}r}(x)}h'(|\varepsilon(u)|)^2 dx&\leq \displaystyle C
 h'(\frac{1}{\varepsilon^{\frac{1}{2}}r})^2+C\frac{1}{r^2}  \biggl(\int_{Q_{2r}(x)}h(|\varepsilon(u)|) dx\biggr)^2\\
&+\displaystyle C \varepsilon \int_{Q_{2r}(x)}h(|\varepsilon(u)|) dx \int_{Q_{2r}(x)}Wdx.
\end{array}
\end{equation}

We then deal with the last term  in\eqref{3.3}. 
Letting $ A=\dashint_{T_{\frac{3}{2}r}(x)}|Du|^2 dx $ and $B=\dashint_{T_{\frac{3}{2}r}(x)}u dx$, we have
\begin{equation*}\label{3.6}
\renewcommand{\arraystretch}{1.25}
\begin{array}{lll}
\displaystyle \frac{1}{r}\int_{T_{\frac{3}{2}r}(x)}|Du|^2|u|dx \leq &\displaystyle  \frac{1}{r}\int_{T_{\frac{3}{2}r}(x)}||Du|^2-A||u-B|dx\\
&+\displaystyle\frac{1}{r}|B|\int_{T_{\frac{3}{2}r}(x)}|Du|^2dx+\displaystyle\frac{1}{r}|A|\int_{T_{\frac{3}{2}r}(x)}|u-B|dx.
\end{array}
\end{equation*}

Recalling the choices of $A$ and $B$, we obtain by Young's inequality, for $\varepsilon>0$,
\begin{equation*}\label{3.7}
\renewcommand{\arraystretch}{1.25}
\begin{array}{lll}
\displaystyle \frac{1}{r}\int_{T_{\frac{3}{2}r}(x)}|Du|^2|u|dx \leq &\displaystyle  \varepsilon\int_{T_{\frac{3}{2}r}(x)}||Du|^2-A|^2dx
+\displaystyle\frac{1}{\varepsilon}\frac{1}{r^2}\int_{T_{\frac{3}{2}r}(x)}|u-B|^2dx\\
&+\displaystyle\frac{C}{r^3}\int_{T_{\frac{3}{2}r}(x)}|Du|^2dx\int_{T_{\frac{3}{2}r}(x)}|u|dx.
\end{array}
\end{equation*}

Applying Poincar\'{e}'s inequality and Sobolev-Poincar\'{e}'s inequality  we obtain  that
\begin{equation*}\label{3.8}
\renewcommand{\arraystretch}{1.25}
\begin{array}{lll}
\displaystyle \frac{1}{r}\int_{T_{\frac{3}{2}r}(x)}|Du|^2|u|dx \leq &\displaystyle  \varepsilon\biggl(\int_{T_{\frac{3}{2}r}(x)}|D(|Du|^2)|dx\biggr)^2
+\displaystyle\frac{1}{\varepsilon } \int_{T_{\frac{3}{2}r}(x)}|Du|^2dx\\
&+\displaystyle\frac{C}{r^3}\int_{T_{\frac{3}{2}r}(x)}|Du|^2dx\int_{T_{\frac{3}{2}r}(x)}|u|dx,
\end{array}
\end{equation*}
from which, together with H\"{o}lder's inequality, it follows that
\begin{equation}\label{3.9}
\renewcommand{\arraystretch}{1.25}
\begin{array}{lll}
\displaystyle \frac{1}{r}\int_{T_{\frac{3}{2}r}(x)}|Du|^2|u|dx \leq & \displaystyle  \varepsilon\int_{T_{\frac{3}{2}r}(x)}|Du|^2dx
\int_{T_{\frac{3}{2}r}(x)}|D^2u|^2dx\\
&+\displaystyle\frac{1}{\varepsilon } \int_{T_{\frac{3}{2}r}(x)}|Du|^2dx
+\displaystyle\frac{C}{r^3}\int_{T_{\frac{3}{2}r}(x)}|Du|^2dx\int_{T_{\frac{3}{2}r}(x)}|u|dx.
\end{array}
\end{equation}

Combining\eqref{3.3},\eqref{3.5} and\eqref{3.9} and observing the inequality $|D^2u(x)|\leq C|D\varepsilon(u)(x)|\leq C W(x)$ we deduce that
\begin{equation}\label{3.10}
\renewcommand{\arraystretch}{1.25}
\begin{array}{lll}
\displaystyle \int_{Q_r(x)} Wdx\leq  & C \varepsilon \displaystyle\int_{T_{\frac{3}{2}r}(x)}|Du|^2dx \displaystyle \int_{Q_{2r}(x)} Wdx\\
&+C \varepsilon \displaystyle \int_{Q_{2r}(x)} h(|\varepsilon(u)|)dx  \int_{Q_{2r}(x)} Wdx\\
&+\displaystyle\frac{C}{r^2}\int_{Q_{2r}(x)}
h(|\varepsilon(u)|)dx+\displaystyle\frac{C}{
 r^2}\int_{Q_{2r}(x)} |Du|^2dx\\
&+C h'(\frac{1}{\varepsilon^{\frac{1}{2}}r})^2 
+\displaystyle\frac{C}{r^2} (\displaystyle \int_{Q_{2r}(x)} h(|\varepsilon(u)|)dx)^2\\
&+\displaystyle \frac{C}{\varepsilon}  \int_{T_{\frac{3}{2}r}(x)}|Du|^2dx+
\displaystyle\frac{C}{r^3}\int_{T_{\frac{3}{2}r}(x)}|Du|^2dx\int_{T_{\frac{3}{2}r}(x)}|u|dx.
\end{array}
\end{equation}

Since $\int_{\mathbb{R}^2}|Du|^2dx\leq C\int_{\mathbb{R}^2}h(|Du|)dx < \infty$, choosing $\varepsilon$ small enough and 
denoting $\delta:=C \varepsilon<\frac{1}{2}$,  we obtain from\eqref{3.10} that
\begin{equation}\label{3.11}
\renewcommand{\arraystretch}{1.25}
\begin{array}{lll}
\displaystyle \int_{Q_r(x)} Wdx\leq  &  \displaystyle \delta\int_{Q_{2r}(x)} Wdx
+\displaystyle\frac{C}{r^2}\int_{Q_{2r}(x)} h(|\varepsilon(u)|)dx+\displaystyle\frac{C}{r^2}\int_{Q_{2r}(x)} |Du|^2dx \\
&+C\displaystyle\int_{T_{\frac{3}{2}r}(x)}|Du|^2dx+\displaystyle C h'(\frac{1}{\varepsilon^{\frac{1}{2}}r})^2 +
\displaystyle\frac{C}{r^3}\int_{Q_{\frac{3}{2}r}(x)}|u|dx.
\end{array}
\end{equation}

In view of the condition $h'(t)\leq C(1+t^m)$ and $\int_{\mathbb{R}^2}|Du|^2dx\leq C \int_{\mathbb{R}^2}h(|Du|)dx < \infty$ it follows that

\begin{equation*}\label{3.12}
\renewcommand{\arraystretch}{1.25}
\begin{array}{lll}
\displaystyle \int_{Q_r(x)} Wdx\leq  &  \displaystyle \delta\int_{Q_{2r}(x)} Wdx
+\displaystyle\frac{C}{r^2}\int_{Q_{2r}(x)} h(|\varepsilon(u)|)dx+\displaystyle\frac{C}{r^2}\int_{Q_{2r}(x)} |Du|^2dx \\
&+\displaystyle C (1+\frac{1}{r^{2m}})+
\displaystyle\frac{C}{r^3}\int_{Q_{2r}(x)}|u|dx.
\end{array}
\end{equation*}

By Lemma\ref{L2.6} we end up with
\begin{equation*}\label{3.13}
\renewcommand{\arraystretch}{1.25}
\begin{array}{lll}
 \displaystyle\int_{Q_R(x_0)} W dx\leq & C \biggl\{\displaystyle\frac{1}{R^2}\int_{Q_{2R}(x_0)} h(|\varepsilon(u)|)dx
 +\displaystyle\frac{1}{R^2}\int_{Q_{2R}(x_0)} |Du|^2 dx\\
&+\biggl(1+\displaystyle\frac{1}{R^{2m}} \biggr)
+\displaystyle\frac{1}{R^3}\int_{Q_{2R}(x_0)} |u|dx \biggr\}.
\end{array}
\end{equation*}
\end{proof}

\begin{lemma}\label{L3.2}
Let $u$ be as in Lemma\ref{L3.1}. Then the following estimate holds
\begin{equation}\label{3.14}
\renewcommand{\arraystretch}{1.25}
\begin{array}{lll}
 \displaystyle\int_{\mathbb{R}^2}D^2H(\varepsilon(u))(\varepsilon(\partial_ku), \varepsilon(\partial_ku))dx< \infty.
\end{array}
\end{equation}

Therefore,
\begin{equation}\label{3.15}
\renewcommand{\arraystretch}{1.25}
\begin{array}{lll}
 \displaystyle\int_{\mathbb{R}^2}|D^2u|^2dx< \infty.
\end{array}
\end{equation}
\end{lemma}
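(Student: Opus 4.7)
The plan is to apply the energy estimate \eqref{3.1} with $x_0=0$ and let $R\to\infty$; the goal is to verify that each term on the right-hand side of \eqref{3.1} remains bounded uniformly in $R$. From the hypothesis $\int_{\mathbb{R}^2} h(|Du|)\,dx<\infty$ together with the lower bound $h(t)\ge \frac{1}{2} h''(0)t^2$ in \eqref{1}, we have both $\int_{\mathbb{R}^2}|Du|^2\,dx<\infty$ and $\int_{\mathbb{R}^2}h(|\varepsilon(u)|)\,dx\le\int_{\mathbb{R}^2}h(|Du|)\,dx<\infty$. Consequently the first two terms on the right of \eqref{3.1} are $O(R^{-2})$ and the term $1+R^{-2m}$ is uniformly bounded in $R$. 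The delicate term is $R^{-3}\int_{Q_{2R}(0)}|u|\,dx$, which requires an a priori growth estimate on $u$ at infinity.

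For this, we invoke the Gilbarg--Weinberger Lemma~\ref{L2.3}: since $\int_{\mathbb{R}^2}|Du|^2\,dx<\infty$, one has $\int_0^{2\pi}|u(r,\theta)|^2\,d\theta=o(\log r)$ as $r\to\infty$. By Cauchy--Schwarz, $\int_0^{2\pi}|u(r,\theta)|\,d\theta=o(\sqrt{\log r})$, and integrating in the radial direction over $B_{2\sqrt{2}R}(0)\supset Q_{2R}(0)$ gives
\[
\int_{Q_{2R}(0)}|u|\,dx = o\bigl(R^2\sqrt{\log R}\bigr),
\]
whence $R^{-3}\int_{Q_{2R}(0)}|u|\,dx\to 0$. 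Feeding this back into \eqref{3.1} yields $\int_{Q_R(0)} W\,dx\le C$ uniformly for large $R$, and monotone convergence gives \eqref{3.14}.

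Statement \eqref{3.15} then follows from \eqref{3.14} by the ellipticity lower bound \eqref{6}, which gives $W\ge h''(0)|D\varepsilon(u)|^2$ pointwise (with summation over $k$), combined with the comparison $|D^2 u|^2\le C|D\varepsilon(u)|^2$ already used in the proof of Lemma~\ref{L3.1}. The main obstacle in the argument is the growth term $R^{-3}\int_{Q_{2R}(0)}|u|\,dx$: this is precisely where the Gilbarg--Weinberger lemma plays its essential role, since no $L^p$ bound on $u$ is given by hypothesis and one cannot otherwise rule out excessive growth of $u$ at infinity.
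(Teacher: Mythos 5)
Your proposal is correct and follows essentially the same route as the paper: both apply the energy estimate \eqref{3.1}, observe that the finite-energy hypothesis controls all terms except $R^{-3}\int_{Q_{2R}}|u|\,dx$, and dispose of that term via the Gilbarg--Weinberger Lemma~\ref{L2.3} (your Cauchy--Schwarz and radial-integration computation simply spells out the step the paper states as \eqref{3.18}). The passage from \eqref{3.14} to \eqref{3.15} via the ellipticity bound \eqref{6} and $|D^2u|\le C|D\varepsilon(u)|$ also matches the paper.
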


\begin{proof}
Since $\int_{\mathbb{R}^2}h(|Du|)dx=:M<\infty$, for $R>1$,\eqref{3.1} gives
\begin{equation}\label{3.17}
\renewcommand{\arraystretch}{1.25}
\begin{array}{lll}
 \displaystyle\int_{Q_R(x_0)} W dx\leq C(M)+ \dfrac{C}{R^3} \displaystyle\int_{Q_{2R}(x_0)} |u|dx.
\end{array}
\end{equation}

Since $u\in C^1(\mathbb{R}^2, \mathbb{R}^2)$ and
$\int_{\mathbb{R}^2}|Du|^2dx\leq C\int_{\mathbb{R}^2}h(|Du|)dx<\infty$,
using Lemma\ref{L2.3} we deduce that

\begin{equation}\label{3.18}
\renewcommand{\arraystretch}{1.25}
\begin{array}{lll}
\limsup\limits_{R\rightarrow\infty}\dfrac{1}{R^3} \displaystyle\int_{Q_{2R}(x_0)} |u|dx=0.
\end{array}
\end{equation}

Letting $R\rightarrow\infty$ in\eqref{3.17} we have
\begin{equation}\label{3.19}
\renewcommand{\arraystretch}{1.25}
\begin{array}{lll}
 \displaystyle\int_{\mathbb{R}^2} W dx< \infty.
\end{array}
\end{equation}

Since $|D^2u(x)|\leq C |D\varepsilon(u)(x)|$, then\eqref{3.19} implies\eqref{3.15}.  The proof is complete.
\end{proof}

\begin{lemma}\label{L3.3}
Let $u$ be as in Lemma\ref{L3.1}, then  we have
\begin{equation}\label{3.20}
\renewcommand{\arraystretch}{1.25}
\begin{array}{lll}
 \displaystyle\int_{\mathbb{R}^2}Wdx=0.
\end{array}
\end{equation}
Hence, $u$ must be a constant vector.
\end{lemma}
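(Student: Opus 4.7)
The argument splits into two stages: first, establishing $\int_{\mathbb{R}^2} W\,dx = 0$; then, deducing that $u$ is a constant vector. The key observation is that the final estimate (3.1) of Lemma~\ref{L3.1} is too weak for the first stage (because of its residual $(1 + R^{-2m})$ term), but its precursor (3.11) in the proof of Lemma~\ref{L3.1} is adequate. My plan is to return to (3.11), namely
\begin{equation*}
\int_{Q_r(x)} W\,dx \le \delta \int_{Q_{2r}(x)} W\,dx + [\text{five error terms}],
\end{equation*}
with $\delta = C\varepsilon < 1/2$, valid for every cube $Q_{2r}(x) \subset \mathbb{R}^2$, and to pass to the limit $r \to \infty$ with $x = x_0$ fixed.

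The left-hand side converges to $\int_{\mathbb{R}^2}W\,dx$, which is finite by Lemma~\ref{L3.2}; the first right-hand term converges to $\delta \int_{\mathbb{R}^2}W\,dx$. I claim each of the five error terms tends to $0$: the two terms $r^{-2}\int_{Q_{2r}(x_0)}h(|\varepsilon(u)|)\,dx$ and $r^{-2}\int_{Q_{2r}(x_0)}|Du|^2\,dx$ because $\int h(|Du|)\,dx<\infty$ and, by \eqref{1}, $\int|Du|^2\,dx<\infty$; the annular term $\int_{T_{3r/2}(x_0)}|Du|^2\,dx$ because $T_{3r/2}(x_0)$ retreats to infinity and $|Du|\in L^2(\mathbb{R}^2)$; the awkward term $h'(1/(\varepsilon^{1/2}r))^2$ because $h\in C^2$ with $h'(0)=0$ gives $h'(t)\sim h''(0)t$ for small $t$ (equivalently, Lemma~\ref{L2.4} yields $h'(1/(\varepsilon^{1/2}r))^2 \le C/(\varepsilon r^2)$); and $r^{-3}\int_{Q_{3r/2}(x_0)}|u|\,dx$ by the Gilbarg--Weinberger argument already executed in Lemma~\ref{L3.2} (applying Lemma~\ref{L2.3}, this is exactly (3.18)). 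Passing to the limit yields $(1-\delta)\int_{\mathbb{R}^2} W\,dx\le 0$, hence $\int_{\mathbb{R}^2}W\,dx=0$.

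For stage two, $W\ge 0$ and $\int W = 0$ give $W\equiv 0$. The ellipticity \eqref{6} then forces $h''(0)|\varepsilon(\partial_k u)|^2\le W=0$, so $\varepsilon(\partial_k u)\equiv 0$ for each $k$; equivalently, $\varepsilon(u)$ is a constant symmetric matrix on $\mathbb{R}^2$. Since $|\varepsilon(u)|\le|Du|$ and $h$ is increasing, $\int h(|\varepsilon(u)|)\,dx\le\int h(|Du|)\,dx<\infty$; as the integrand is constant, it must vanish, and strict monotonicity of $h$ with $h(0)=0$ yields $\varepsilon(u)\equiv 0$. Therefore $u(x)=Ax+b$ with $A$ antisymmetric, and finiteness of $\int|Du|^2\,dx$ forces $A=0$, so $u\equiv b$. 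The main obstacle is the decay of the $h'$-term: the crude pointwise bound $h'(t)\le C(1+t^m)$ used in Lemma~\ref{L3.1} loses the small-argument information and produces the $O(1)$ defect in (3.1); recovering this decay via $h\in C^2$ or Lemma~\ref{L2.4} is exactly what upgrades the finite bound of Lemma~\ref{L3.2} to the sharp vanishing claimed here.
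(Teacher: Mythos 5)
Your proposal is correct and follows essentially the same route as the paper: the paper likewise returns to \eqref{3.11} (rather than \eqref{3.1}), sends $r\to\infty$ using the finiteness of $\int_{\mathbb{R}^2}h(|Du|)\,dx$ and $\int_{\mathbb{R}^2}|Du|^2\,dx$, the condition $h'(0)=0$ for the $h'(1/(\varepsilon^{1/2}r))^2$ term, and the Gilbarg--Weinberger estimate \eqref{3.18} for the $r^{-3}\int|u|$ term, to obtain $\int_{\mathbb{R}^2}W\,dx\le\tfrac12\int_{\mathbb{R}^2}W\,dx$. Your endgame (via $\varepsilon(u)$ constant and $\int h(|\varepsilon(u)|)<\infty$) is a harmless variant of the paper's (via $D^2u=0$, $u$ affine, and $\int|Du|^2<\infty$).
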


\begin{proof}
Since $\int_{\mathbb{R}^2}|Du|^2dx\leq C\int_{\mathbb{R}^2}h(|Du|)dx<\infty$, it implies 
that $\lim\limits_{r\rightarrow\infty} \int_{T_{\frac{3}{2}r}(x)}|Du|^2dx$ $=0$. Letting $r\rightarrow \infty$ in\eqref{3.11}, we have by 
the condition $h'(0)=0$,\eqref{3.18} and\eqref{3.19} that

\begin{equation}\label{3.21}
\renewcommand{\arraystretch}{1.25}
\begin{array}{lll}
 \displaystyle\int_{\mathbb{R}^2}Wdx \leq \dfrac{1}{2} \displaystyle\int_{\mathbb{R}^2}Wdx.
\end{array}
\end{equation}
Thus\eqref{3.20} holds, and hence $W(x)= 0$. From the relation $|D^2u(x)|^2\leq C |D\varepsilon(u)(x)|^2\leq C W(x)$, we know that $D^2u(x)= 0$. Therefore  $u$
must be  an affine function. On the other hand, in view of inequality $\int_{\mathbb{R}^2}|Du|^2dx\leq C\int_{\mathbb{R}^2}h(|Du|)dx<\infty$, it gives $Du(x)= 0$. Then, $u$ is a constant vector.
\end{proof}

\section{Proof of \ref{t2}}

To prove\tref{t2}, the Liouville theorem under the integrability condition of $u$, we need the energy estimates for the first 
order derivatives and the second order derivatives. The following Lemma gives that for the first order derivatives.

\begin{lemma}\label{L4.1}
Let $u\in C^1(\mathbb{R}^2, \mathbb{R}^2)$ be an entire weak solution of\eqref{a}. Then, for any $x_0\in \mathbb{R}^2$,
$R>0$, the following energy estimate holds

\begin{equation}\label{4.1}
\renewcommand{\arraystretch}{1.25}
\begin{array}{lll}
 \displaystyle\int_{Q_R(x_0)}h(|\varepsilon(u)|)dx \leq  &C\biggl\{\dfrac{1}{R^{\tau'}} \displaystyle\int_{Q_{2R}(x_0)}|u|^{\tau'} dx
 +\dfrac{1}{R^2}\displaystyle\int_{Q_{2R}(x_0)}|u|^2 dx \\
&+\dfrac{1}{R^2}  \biggl(\displaystyle\int_{Q_{2R}(x_0)}|u|^2 dx \biggr)^2  \biggr\},
\end{array}
\end{equation}
where $\tau'=\frac{\tau}{\tau-1}$ and $\tau, 1<\tau\leq 2$, is as in Lemma\ref{L2.4}.
\end{lemma}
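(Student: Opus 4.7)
The plan is to test the weak formulation \eqref{d} against $\varphi = \eta u - w$, where $\eta\in C_0^\infty(Q_{2r}(z))$ is a standard cutoff with $\eta\equiv 1$ on $Q_r(z)$ and $|D\eta|\le C/r$, and $w\in W_0^{1,q}(Q_{2r}(z),\mathbb{R}^2)$ is produced by Lemma~\ref{L2.1} to solve $\divergence w=D\eta\cdot u$ (which has zero mean, since $\eta u$ is compactly supported and $\divergence u=0$), so that $\divergence\varphi=0$. For each exponent $q$ I shall need (in particular $q=\tau'$ and $q=2$), Lemma~\ref{L2.1} will give $\|Dw\|_{L^q(Q_{2r}(z))}\le C r^{-1}\|u\|_{L^q(Q_{2r}(z))}$.

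Inserting $\varphi$ into \eqref{d} and expanding $\varepsilon(\eta u)=\eta\,\varepsilon(u)+\tfrac12(u\otimes D\eta+D\eta\otimes u)$, the principal stress contribution $\int\eta\,T(\varepsilon(u))\!:\!\varepsilon(u)\,dx$ is bounded below by $c\int_{Q_r(z)}h(|\varepsilon(u)|)\,dx$ through the balancing condition \eqref{3} (since $\eta\equiv 1$ on $Q_r(z)$). The remaining pieces $\int T(\varepsilon(u))\!:\!(u\otimes D\eta)_{\mathrm{sym}}\,dx$ and $\int T(\varepsilon(u))\!:\!\varepsilon(w)\,dx$ I handle by invoking Lemma~\ref{L2.4} to replace $h'(|\varepsilon(u)|)$ by $C(h(|\varepsilon(u)|)^{1/\tau}+|\varepsilon(u)|)$, followed by Young's inequality with conjugate pair $(\tau,\tau')$ on the $h^{1/\tau}$ factor and pair $(2,2)$ on the $|\varepsilon(u)|$ factor; the resulting $|\varepsilon(u)|^2$ terms are absorbed into $\int h(|\varepsilon(u)|)$ via \eqref{1}, and the $|Dw|^{\tau'}$, $|Dw|^2$ contributions become $r^{-\tau'}\int|u|^{\tau'}$ and $r^{-2}\int|u|^2$ by Lemma~\ref{L2.1}.

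The convective term is the delicate ingredient. Since $\divergence u=0$, integration by parts yields
\[
\int u^k u^i\partial_k(\eta u^i)\,dx=\tfrac12\int|u|^2\,u\cdot D\eta\,dx,
\]
so the convective part of \eqref{d} reduces to $\tfrac12\int|u|^2\,u\cdot D\eta\,dx-\int u^ku^i\partial_k w^i\,dx$. By H\"older these are controlled by $r^{-1}\int|u|^3$ and $(\int|u|^4)^{1/2}(\int|Dw|^2)^{1/2}$; then the local Ladyzhenskaya inequality (Lemma~\ref{L2.5}) combined with Korn's inequality (Lemma~\ref{L2.2}) and \eqref{1} converts every $\int|Du|^2$ into $C\int h(|\varepsilon(u)|)+Cr^{-2}\int|u|^2$, after which one further Young step plus the elementary bound $t^{3/2}\le t+t^2$ dominates these contributions by $\epsilon\int h(|\varepsilon(u)|)+C(\epsilon)r^{-2}\int|u|^2+C(\epsilon)r^{-2}(\int|u|^2)^2$.

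Assembling all bounds, choosing $\epsilon$ sufficiently small, and setting $\delta=C\epsilon$ yields, for every $Q_r(z)\subset Q_{2r}(z)\subset Q_{2R}(x_0)$,
\[
\int_{Q_r(z)}h(|\varepsilon(u)|)\,dx\le\delta\!\int_{Q_{2r}(z)}\!h(|\varepsilon(u)|)\,dx+C(\delta)\Big\{\frac{1}{r^{\tau'}}\!\int_{Q_{2r}(z)}\!|u|^{\tau'}dx+\frac{1}{r^2}\!\int_{Q_{2r}(z)}\!|u|^2dx+\frac{1}{r^2}\Big(\!\int_{Q_{2r}(z)}\!|u|^2dx\Big)^{\!2}\Big\},
\]
and Lemma~\ref{L2.6}, applied with $f=h(|\varepsilon(u)|)$ and three right-hand summands (of exponents $\beta_j\in\{1,1,2\}$), removes the $\delta$-term and delivers \eqref{4.1}. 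The main obstacle I expect is the cubic convective contribution: threading Ladyzhenskaya through Korn so that every $\int|Du|^2$ is replaced by $\int h(|\varepsilon(u)|)$ (which the left-hand side can absorb) is what forces the quadratic $(\int|u|^2)^2$ term to appear, while balancing the $h'$-factor against $|u|/r$ via the $(\tau,\tau')$-Young split is what produces the exponent $\tau'$ in the first right-hand term of \eqref{4.1}.
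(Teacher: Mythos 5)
Your proposal is correct and follows essentially the same route as the paper: the same divergence-corrected test function $\eta u-w$ built from Lemma~\ref{L2.1}, the same use of the sharper estimate in Lemma~\ref{L2.4} with a $(\tau,\tau')$/$(2,2)$ Young split for the stress terms, the same integration by parts plus Lemma~\ref{L2.5} and Korn for the convective term, and the same conclusion via Lemma~\ref{L2.6} with $\beta_j\in\{1,1,2\}$. The only divergence is cosmetic bookkeeping (cutoff $\eta$ versus $\eta^2$, and absorbing the Ladyzhenskaya product $\int|u|^2\int|Du|^2$ by a Young step on $(\int|Du|^2)^{1/2}$ rather than by the paper's choice $\varepsilon=\delta/(C_0(1+\int|u|^2))$), which leads to the identical reverse H\"older inequality.
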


\begin{proof}
 For any $x_0\in \mathbb{R}^2$,  $R>0$ and $x\in Q_{2R}(x_0)$, $r>0$ s.t.  $Q_{2r}(x)\subset Q_{2R}(x_0)$, we
choose the cut-off function $\eta\in C_0^{\infty}(Q_{\frac{3}{2}r}(x))$ as Lemma\ref{L3.1} and find a solution $\varpi$ to the following equation
\begin{equation*}\label{4.2}
\renewcommand{\arraystretch}{1.25}
\begin{array}{lll}
 \divergence\varpi= \divergence (u\eta^2)=u\cdot D \eta^2
\end{array}
\end{equation*}
s.t.
\begin{equation*}\label{4.3}
\renewcommand{\arraystretch}{1.25}
\begin{array}{lll}
 \text{spt}\varpi \subset  Q_{\frac{3}{2}r}(x)
\end{array}
\end{equation*}
and for any $q>1$, the following estimate holds
\begin{equation}\label{4.4}
\renewcommand{\arraystretch}{1.25}
\begin{array}{lll}
 \|\varpi\|_{W^{1,q}}\leq C(q)\|u\cdot D\eta^2\|_{L^q}.
\end{array}
\end{equation}

Taking the test function $\varphi=u\eta^2-\varpi$ in\eqref{a} we obtain
\begin{equation*}\label{4.5}
\renewcommand{\arraystretch}{1.25}
\begin{array}{lll}
\displaystyle\int_{\mathbb{R}^2} & DH(\varepsilon(u)):\varepsilon(u\eta^2)dx-\displaystyle\int_{\mathbb{R}^2}DH(\varepsilon(u)):\varepsilon(\varpi)dx
-\displaystyle\int_{\mathbb{R}^2}u^iu^j\\
&\partial_{i}(u^j\eta^2)dx+\displaystyle\int_{\mathbb{R}^2}u^iu^j\partial_{i}\varpi^jdx=0.
\end{array}
\end{equation*}
Hence,
\begin{equation}\label{4.6}
\renewcommand{\arraystretch}{1.25}
\begin{array}{lll}
\displaystyle\int_{\mathbb{R}^2}  DH(\varepsilon(u)):\varepsilon(u)\eta^2dx&=-\displaystyle\int_{\mathbb{R}^2} DH(\varepsilon(u)):u\otimes D\eta^2dx
+\displaystyle\int_{\mathbb{R}^2}DH(\varepsilon(u))\\
&\ \ \ :\varepsilon(\varpi)dx
+\displaystyle\int_{\mathbb{R}^2}u^iu^j\partial_{i}(u^j\eta^2)dx
-\displaystyle\int_{\mathbb{R}^2}u^iu^j\partial_{i}\varpi^jdx\\
&=:I+II+III+IV.
\end{array}
\end{equation}

Recalling the definition of $H$ and $\eta$ and applying Lemma\ref{L2.4} and Young's inequality,  we have for any $0<\delta<1$
\begin{equation}\label{4.7}
\renewcommand{\arraystretch}{1.25}
\begin{array}{lll}
I&\leq\displaystyle\int_{\mathbb{R}^2}h'(|\varepsilon(u)|)|u||D\eta|dx\leq
C \displaystyle\int_{\mathbb{R}^2}\biggl(h(|\varepsilon(u)|)^{\frac{1}{\tau}}+|\varepsilon(u)|\biggr)|u||D\eta|dx \\
&\leq \delta \displaystyle\int_{Q_{\frac{3}{2}r}(x)}h(|\varepsilon(u)|)dx+C(\tau, \delta)\biggl\{\dfrac{1}{r^2}
\displaystyle\int_{Q_{\frac{3}{2}r}(x)}|u|^2dx +\dfrac{1}{r^{\tau'}} \displaystyle\int_{Q_{\frac{3}{2}r}(x)}|u|^{\tau'}dx\biggr\}.
\end{array}
\end{equation}

We deal with $II$ in the same way and we obtain
\begin{equation*}\label{4.8}
\renewcommand{\arraystretch}{1.25}
\begin{array}{lll}
II\leq
 &\delta \displaystyle\int_{Q_{\frac{3}{2}r}(x)}h(|\varepsilon(u)|)dx+C(\tau, \delta)\biggl\{
 \displaystyle\int_{Q_{\frac{3}{2}r}(x)}|\varepsilon(\varpi)|^2dx \\
&+
  \displaystyle\int_{Q_{\frac{3}{2}r}(x)}|\varepsilon(\varpi)|^{\tau'}dx\biggr\}.
\end{array}
\end{equation*}
Thus it follows from\eqref{4.4} that 
\begin{equation}\label{4.9}
\renewcommand{\arraystretch}{1.25}
\begin{array}{lll}
II\leq
 \delta \displaystyle\int_{Q_{\frac{3}{2}r}(x)}h(|\varepsilon(u)|)dx+C(\tau, \delta)\biggl\{\dfrac{1}{r^2}
 \displaystyle\int_{Q_{\frac{3}{2}r}(x)}|u|^2dx +\dfrac{1}{r^{\tau'}} \displaystyle\int_{Q_{\frac{3}{2}r}(x)}|u|^{\tau'}dx\biggr\}.
\end{array}
\end{equation}

We estimate $III$ by integration by parts. We have by the equation $\divergence u=0$ and Young's inequality that for any $\varepsilon>0$,
\begin{equation*}\label{4.10}
\renewcommand{\arraystretch}{1.25}
\begin{array}{lll}
III&=\displaystyle\int_{\mathbb{R}^2}u^iu^j\partial_i(u^j\eta^2)dx=\displaystyle\int_{\mathbb{R}^2}u^iu^j\partial_iu^j\eta^2dx
+\displaystyle\int_{\mathbb{R}^2}u^iu^ju^j\partial_{i}\eta^2dx\\
&=\displaystyle\int_{\mathbb{R}^2}\frac{|u|^2}{2}u\cdot D\eta^2 \leq \dfrac{C}{r}\displaystyle\int_{Q_{\frac{3}{2}r}(x)}|u|^{3}dx \\
&\leq \varepsilon
\displaystyle\int_{Q_{\frac{3}{2}r}(x)}|u|^{4}dx+\dfrac{C}{\varepsilon r^2}\displaystyle\int_{Q_{\frac{3}{2}r}(x)}|u|^{2}dx.
\end{array}
\end{equation*}

It remains to deal with $IV$. By Young's inequality and\eqref{4.4}, we have
\begin{equation*}\label{4.11}
\renewcommand{\arraystretch}{1.25}
\begin{array}{lll}
IV\leq \varepsilon
\displaystyle\int_{Q_{\frac{3}{2}r}(x)}|u|^{4}dx+\dfrac{C}{\varepsilon r^2}\displaystyle\int_{Q_{\frac{3}{2}r}(x)}|u|^{2}dx.
\end{array}
\end{equation*}
Furthermore, Lemma\ref{L2.5} gives us 
\begin{equation*}\label{4.12}
\renewcommand{\arraystretch}{1.25}
\begin{array}{lll}
\displaystyle\int_{Q_{\frac{3}{2}r}(x)}|u|^{4}dx\leq C_0 \displaystyle\int_{Q_{\frac{3}{2}r}(x)}|u|^{2}dx
\displaystyle\int_{Q_{\frac{3}{2}r}(x)}|Du|^{2}dx+\dfrac{C_0}{r^2}(\displaystyle\int_{Q_{\frac{3}{2}r}(x)}|u|^{2}dx)^2.
\end{array}
\end{equation*}

We Choose $\varepsilon=\dfrac{\delta}{C_0(1+\int_{Q_{\frac{3}{2}r}(x)}|u|^{2}dx)}$. Then $III$ and $IV$ are both bounded from above by 
\begin{equation}\label{4.13}
\renewcommand{\arraystretch}{1.25}
\begin{array}{lll}
\delta \displaystyle\int_{Q_{\frac{3}{2}r}(x)}|Du|^{2}dx+C(\delta)\biggl\{\dfrac{1}{r^2} \int_{Q_{\frac{3}{2}r}(x)}|u|^{2}dx
+ \dfrac{1}{r^2} (\int_{Q_{\frac{3}{2}r}(x)}|u|^{2}dx)^2 \biggr\}.
\end{array}
\end{equation}

Now, we want to give an appropriate control for the term
$\int_{Q_{\frac{3}{2}r}(x)}|Du|^{2}dx$. Choosing the cut-off
function $\xi$ s.t. $\xi\in C_0^\infty(Q_{2r}(x))$, $0\leq \xi\leq
1$, $\xi=1$ on $Q_{\frac{3}{2}r}(x)$ and $|D\xi|\leq \frac{4}{r}$,  we deduce from Lemma\ref{L2.2} that
\begin{equation}\label{4.14}
\renewcommand{\arraystretch}{1.25}
\begin{array}{lll}
\displaystyle\int_{Q_{\frac{3}{2}r}(x)}|Du|^{2}dx&\leq \displaystyle\int_{Q_{2r}(x)}|D(u\xi)|^{2}dx \leq
C \displaystyle\int_{Q_{2r}(x)}|\varepsilon(u\xi)|^{2}dx\\
&\leq C \displaystyle\int_{Q_{2r}(x)}|\varepsilon(u)|^{2}dx +  \frac{C}{r^2} \displaystyle\int_{Q_{2r}(x)}|u|^{2}dx.
\end{array}
\end{equation}

Putting together the estimates\eqref{4.6},\eqref{4.7},\eqref{4.9},\eqref{4.13} and\eqref{4.14} and noting $ h(\varepsilon(u))\geq \frac{1}{2}h''(0)|\varepsilon(u)|^2, h''(0)>0$, we have
\begin{equation}\label{4.15}
\renewcommand{\arraystretch}{1.25}
\begin{array}{lll}
\displaystyle\int_{\mathbb{R}^2}  DH(\varepsilon(u)):\varepsilon(u)\eta^2dx \leq &C\delta \displaystyle\int_{Q_{2r}(x)}h(|\varepsilon(u)|)dx
+C(\delta, \tau)\biggl\{\dfrac{1}{r^{\tau'}} \displaystyle\int_{Q_{\frac{3}{2}r}(x)}|u|^{\tau'} \\
&dx+\dfrac{1}{r^2} \displaystyle\int_{Q_{2r}(x)}|u|^{2}dx+\dfrac{1}{r^2} \biggl(\displaystyle\int_{Q_{2r}(x)}|u|^{2}dx\biggr)^2 \biggr\}.
\end{array}
\end{equation}
Thus it follows from\eqref{3} that for any $\delta>0$ 
\[
\renewcommand{\arraystretch}{1.25}
\begin{array}{lll}
\displaystyle\int_{Q_{r}(x)} h(|\varepsilon(u)|) dx \leq & C\delta \displaystyle\int_{Q_{2r}(x)}h(|\varepsilon(u)|)dx
+C( \tau)\biggl\{\dfrac{1}{r^{\tau'}} \displaystyle\int_{Q_{2r}(x)}|u|^{\tau'}dx \\
&+\dfrac{1}{r^2} \displaystyle\int_{Q_{2r}(x)}|u|^{2}dx+\dfrac{1}{r^2} \biggl(\displaystyle\int_{Q_{2r}(x)}|u|^{2}dx\biggr)^2 \biggr\},
\end{array}
\]
from which,  by Lemma\ref{L2.6}, it follows that
\[
\renewcommand{\arraystretch}{1.25}
\begin{array}{lll}
 \displaystyle\int_{Q_R(x_0)}h(|\varepsilon(u)|)dx \leq  &C\biggl\{\dfrac{1}{R^{\tau'}} \displaystyle\int_{Q_{2R}(x_0)}|u|^{\tau'} dx
 +\dfrac{1}{R^2}\displaystyle\int_{Q_{2R}(x_0)}|u|^2 dx \\
&+\dfrac{1}{R^2}  \biggl(\displaystyle\int_{Q_{2R}(x_0)}|u|^2 dx \biggr)^2  \biggr\}.
\end{array}
\]
This finishes the proof of Lemma\ref{L4.1}.
\end{proof}

\begin{lemma}\label{L4.2}
Let $u\in C^1(\mathbb{R}^2, \mathbb{R}^2)$ be an entire weak solution of\eqref{a} and $2\leq \tau' < 4$. Then,
for any $x_0\in \mathbb{R}^2$, $R>0$  we have
\begin{equation}\label{4.16}
\renewcommand{\arraystretch}{1.25}
\begin{array}{lll}
 \displaystyle\int_{Q_R(x_0)}h(|\varepsilon(u)|)dx \leq  & C(\tau)\biggl\{\dfrac{1}{R^2}\displaystyle\int_{Q_{2R}(x_0)}|u|^2 dx
 +\dfrac{1}{R^{\bar\tau}}\displaystyle\int_{Q_{2R}(x_0)}|u|^2 dx\\
&+\dfrac{1}{R^2}  \biggl(\displaystyle\int_{Q_{2R}(x_0)}|u|^2 dx \biggr)^2 +\dfrac{1}{R^{ \bar\tau}}
 \biggl(\displaystyle\int_{Q_{2R}(x_0)}|u|^2 dx \biggr)^{\tau^{\star}+1}  \biggr\},
\end{array}
\end{equation}
where $\bar\tau=\frac{2\tau'}{4-\tau'}\geq 2$ and $\tau^{\star}=\frac{\tau'-2}{4-\tau'}$.
\end{lemma}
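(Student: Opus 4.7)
The plan is to bootstrap from Lemma~\ref{L4.1}: the terms $R^{-2}\int|u|^2\,dx$ and $R^{-2}(\int|u|^2\,dx)^2$ appearing on the right side of~(4.1) already have the form demanded by~(4.16), so the entire task reduces to controlling $R^{-\tau'}\int_{Q_{2R}(x_0)}|u|^{\tau'}\,dx$ purely in terms of the $L^2$-norm of $u$. Because $2\le\tau'<4$, I interpolate $L^{\tau'}$ between $L^2$ and $L^4$ with exponent $\theta=(4-\tau')/\tau'$, which gives on any cube $Q_{2r}(x)\subset Q_{2R}(x_0)$
\begin{equation*}
\int_{Q_{2r}(x)}|u|^{\tau'}dx \le \biggl(\int_{Q_{2r}(x)}|u|^2 dx\biggr)^{(4-\tau')/2} \biggl(\int_{Q_{2r}(x)}|u|^4 dx\biggr)^{(\tau'-2)/2}.
\end{equation*}
The local Ladyzhenskaya inequality (Lemma~\ref{L2.5}) then bounds $\int|u|^4 dx$ by $\int|u|^2 dx\int|Du|^2 dx + r^{-2}(\int|u|^2 dx)^2$.

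To handle $\int|Du|^2$, I would repeat the cutoff argument of estimate~(4.14), applying Korn's inequality (Lemma~\ref{L2.2}) to $u\xi$ for a standard cutoff $\xi\in C_0^\infty(Q_{2r}(x))$; combined with the pointwise lower bound $|\varepsilon(u)|^2\le\frac{2}{h''(0)}h(|\varepsilon(u)|)$ coming from~(2.1), this yields $\int_{Q_{3r/2}(x)}|Du|^2 dx\le C\bigl(\int_{Q_{2r}(x)}h(|\varepsilon(u)|)dx + r^{-2}\int_{Q_{2r}(x)}|u|^2 dx\bigr)$. Writing $a:=\int_{Q_{2r}(x)}|u|^2 dx$ and $F:=\int_{Q_{2r}(x)}h(|\varepsilon(u)|)dx$, collecting these estimates and expanding produces
\begin{equation*}
r^{-\tau'}\int_{Q_{3r/2}(x)}|u|^{\tau'} dx \le C\bigl(r^{-\tau'} a\cdot F^{(\tau'-2)/2} + r^{-(2\tau'-2)} a^{\tau'/2}\bigr).
\end{equation*}

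The decisive step is a two-fold application of Young's inequality which produces exactly the exponents $\bar\tau$ and $\tau^{\star}+1$ demanded by~(4.16). For the first term I use standard Young with conjugate exponents $p=2/(\tau'-2)$ and $p'=2/(4-\tau')=\tau^{\star}+1$, obtaining $r^{-\tau'}a\cdot F^{(\tau'-2)/2}\le\delta F+C_\delta r^{-\tau' p'} a^{p'}=\delta F+C_\delta r^{-\bar\tau} a^{\tau^{\star}+1}$, since $\tau' p'=\bar\tau$. For the second term I use the weighted Young inequality $x^s y^{1-s}\le sx+(1-s)y$ with the choice $s=(\tau'-2)/2\in[0,1)$; the algebraic identities $2s+\bar\tau(1-s)=2\tau'-2$ and $s+(\tau^{\star}+1)(1-s)=\tau'/2$ then give
\begin{equation*}
r^{-(2\tau'-2)}a^{\tau'/2}=(r^{-2}a)^{s}(r^{-\bar\tau} a^{\tau^{\star}+1})^{1-s}\le s\,r^{-2}a+(1-s)\,r^{-\bar\tau} a^{\tau^{\star}+1}.
\end{equation*}

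Feeding these two bounds back into the intermediate hole-filling estimate~(4.15) from the proof of Lemma~\ref{L4.1}, the $\delta F$ term combines with the pre-existing $C\delta\int_{Q_{2r}}h\,dx$ factor on the right-hand side; choosing $\delta$ small enough produces exactly the form required by Lemma~\ref{L2.6}, namely $\int_{Q_r(x)}h\,dx\le\delta_0\int_{Q_{2r}(x)}h\,dx+C\sum_j r^{-\alpha_j}(\int_{Q_{2r}(x)}|u|^2 dx)^{\beta_j}$ with the pairs $(\alpha_j,\beta_j)\in\{(2,1),(2,2),(\bar\tau,\tau^{\star}+1)\}$, and the $\varepsilon$-lemma converts this local inequality into the global bound~(4.16) on $Q_R(x_0)$. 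The main obstacle is precisely the algebraic bookkeeping around the weighted Young splitting: verifying that the two identities displayed above are what force the exponents $\bar\tau$ and $\tau^{\star}+1$; once these are pinned down, the rest of the argument is routine absorption.
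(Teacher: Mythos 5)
Your proposal is correct and follows essentially the same route as the paper: both control $R^{-\tau'}\int|u|^{\tau'}dx$ by interpolating $L^{\tau'}$ between $L^{2}$ and $L^{4}$, invoke the local Ladyzhenskaya inequality (Lemma~\ref{L2.5}) together with Korn's inequality and the lower bound $h(t)\geq \frac{1}{2}h''(0)t^{2}$, and then absorb via the $\varepsilon$-lemma starting from the intermediate estimate \eqref{4.15}. The only difference is organizational --- the paper performs the interpolation as a pointwise Young inequality \eqref{4.17} with the choice $\varepsilon=\delta/(C_0(1+\int_{Q_{\frac{3}{2}r}(x)}|u|^{2}dx))$, whereas you apply H\"older at the integral level first and then Young on the collected quantities $a$ and $F$ --- and your exponent identities for $\bar\tau$ and $\tau^{\star}+1$ check out, so the argument goes through.
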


\begin{proof}
Returning to\eqref{4.15}, we
just need to
control $\frac{1}{r^{\tau'}}\int_{Q_{\frac{3}{2}r}(x)}|u|^{\tau'}$ $dx$. By
Young's inequality we have,  for any $\varepsilon>0$,
\begin{equation}\label{4.17}
\renewcommand{\arraystretch}{1.25}
\begin{array}{lll}
 \dfrac{1}{r^{\tau'}}\displaystyle\int_{Q_{\frac{3}{2}r}(x)}|u|^{\tau'} dx\leq \varepsilon \displaystyle\int_{Q_{\frac{3}{2}r}(x)}|u|^{4} dx
 +\displaystyle C(\tau)\dfrac{1}{\varepsilon^{\tau^{\star}}}\dfrac{1}{r^{\bar\tau }}\int_{Q_{\frac{3}{2}r}(x)}|u|^{2} dx,
\end{array}
\end{equation}
where $\bar\tau=\frac{2\tau'}{4-\tau'}$. Moreover, by Lemma\ref{L2.5} we find
\begin{equation}\label{4.18}
\renewcommand{\arraystretch}{1.25}
\begin{array}{lll}
 \displaystyle\int_{Q_{\frac{3}{2}r}(x)}|u|^{4} dx\leq \displaystyle C_0\int_{Q_{\frac{3}{2}r}(x)}|u|^{2} dx\int_{Q_{\frac{3}{2}r}(x)}|Du|^{2} dx
 +\displaystyle C_0 \dfrac{1}{r^2}\biggl(\int_{Q_{\frac{3}{2}r}(x)}|u|^{2} dx\biggr)^2.
\end{array}
\end{equation}

Thus, putting together the estimates\eqref{4.17},\eqref{4.18} we have
\begin{equation}\label{4.19}
\renewcommand{\arraystretch}{1.25}
\begin{array}{lll}
 \dfrac{1}{r^{\tau'}}\displaystyle\int_{Q_{\frac{3}{2}r}(x)}|u|^{\tau'} dx & \leq  \displaystyle \varepsilon C_0\int_{Q_{\frac{3}{2}r}(x)}|u|^{2} dx\int_{Q_{\frac{3}{2}r}(x)}|Du|^{2} dx\\
&+\displaystyle \varepsilon C_0 \dfrac{1}{r^2}\biggl(\int_{Q_{\frac{3}{2}r}(x)}|u|^{2} dx\biggr)^2
+\displaystyle C(\tau)\dfrac{1}{\varepsilon^{\tau^{\star}}}\dfrac{1}{r^{\bar\tau }}\int_{Q_{\frac{3}{2}r}(x)}|u|^{2} dx.
\end{array}
\end{equation}

Letting $\varepsilon=\dfrac{\delta}{C_0(1+\int_{Q_{\frac{3}{2}r}(x)}|u|^{2} dx)}$ and putting together the estimates\eqref{4.15},\eqref{4.19}, in view of  $\eta=1$ in $Q_r(x)$, we end up with
\begin{equation*}\label{4.20}
\renewcommand{\arraystretch}{1.25}
\renewcommand{\arraystretch}{1.25}
\begin{array}{lll}
 \displaystyle\int_{Q_r(x)}h(|\varepsilon(u)|)dx \leq & C\delta \displaystyle\int_{Q_{2r}(x)}h(|\varepsilon(u)|)dx+\displaystyle\delta\int_{Q_{\frac{3}{2}r}(x)}|Du|^{2} dx\\
 &+ C(\tau, \delta)\biggl\{\dfrac{1}{r^2}\displaystyle\int_{Q_{2r}(x)}|u|^2 dx
 +\dfrac{1}{r^{\bar\tau}}\displaystyle\int_{Q_{2r}(x)}|u|^2 dx\\
&+\dfrac{1}{r^2}  \biggl(\displaystyle\int_{Q_{2r}(x)}|u|^2 dx \biggr)^2
+\dfrac{1}{r^{ \bar\tau}}  \biggl(\displaystyle\int_{Q_{2r}(x)}|u|^2 dx \biggr)^{\tau^{\star}+1}  \biggr\}.
\end{array}
\end{equation*}

To prove \eqref{4.16}, we may repeat the steps after\eqref{4.13} in the proof of Lemma\ref{L4.1}. We omit the details.
\end{proof}
\begin{lemma}\label{L4.3}
Let $u\in C^1(\mathbb{R}^2, \mathbb{R}^2)$ be an entire weak solution of\eqref{a}, then the following  holds

$(a)$ If $2\leq \tau' \leq 3$ and $u\in L^p(\mathbb{R}^2, \mathbb{R}^2)$, $1<p<2$, then $u$ must be a zero vector.

$(b)$ If $3< \tau' <4$ and $u\in L^p(\mathbb{R}^2, \mathbb{R}^2)$, $\tau'-2<p<2$, then $u$ must be a zero vector.
\end{lemma}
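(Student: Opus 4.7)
The plan is to combine the energy estimate of Lemma~\ref{L4.2} with an interpolation argument based on $u\in L^p$ and Ladyzhenskaya's inequality, and then apply the $\varepsilon$-lemma (Lemma~\ref{L2.6}) to conclude $\int_{\mathbb{R}^2}h(|\varepsilon(u)|)\,dx=0$. Once this is established, $\varepsilon(u)\equiv 0$ in $\mathbb{R}^2$ forces $u(x)=a+b(x_2,-x_1)$ for constants $a\in\mathbb{R}^2$, $b\in\mathbb{R}$, and the integrability $u\in L^p(\mathbb{R}^2)$ then forces $a=b=0$.

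Fix $x_0\in\mathbb{R}^2$ and set $E(R)=\int_{Q_R(x_0)}h(|\varepsilon(u)|)\,dx$, $M(R)=\int_{Q_R(x_0)}|u|^2\,dx$, $A(R)=\int_{Q_R(x_0)}|u|^p\,dx$. Interpolating $L^2$ between $L^p$ and $L^4$ with parameter $\theta=p/(4-p)$, and then invoking Ladyzhenskaya's inequality (Lemma~\ref{L2.5}), Korn's inequality (Lemma~\ref{L2.2}), and $h(t)\ge \tfrac{1}{2}h''(0)t^2$, one obtains after Young's inequality, for $R$ sufficiently large,
\[
M(R)\le CA(R)E(R)^{(2-p)/2}+CA(R)^{2/p}R^{-(4-2p)/p}.
\]
Substituting this (at radius $2R$) into the energy estimate of Lemma~\ref{L4.2}, and using $\bar\tau=2\tau'/(4-\tau')$ and $\tau^\star+1=2/(4-\tau')$, the right-hand side becomes a finite sum of terms of the form $R^{-\alpha}A(2R)^\mu E(2R)^\kappa$ with $\kappa\in\{\beta,\,2\beta,\,\beta(\tau^\star+1)\}$, $\beta=(2-p)/2$, plus $R$-decaying pure $A$-remainders. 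The crucial observation is that all three exponents are strictly less than $1$: $2\beta=2-p<1$ is automatic, and $\beta(\tau^\star+1)=(2-p)/(4-\tau')<1$ is exactly the condition $p>\tau'-2$ (automatic in case (a), since $\tau'-2\le 1<p$; the standing hypothesis in case (b)). Applying Young's inequality $R^{-\alpha}A^\mu E(2R)^\kappa\le \epsilon E(2R)+C(\epsilon)R^{-\alpha/(1-\kappa)}A^{\mu/(1-\kappa)}$ to each such term produces
\[
E(R)\le \delta E(2R)+C(\delta)\sum_j R^{-\alpha_j}A(2R)^{\beta_j},
\]
with $\delta<\tfrac{1}{2}$, each $\alpha_j>0$, and each $\beta_j\ge 1$ (e.g.\ $\beta_j=2/(2+p-\tau')>1$ for the term arising from $E^{\beta(\tau^\star+1)}$, since $p<2\le \tau'$). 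Because this inequality holds for every subball $Q_{2r}(z)\subset Q_{2R}(x_0)$, Lemma~\ref{L2.6} eliminates the self-referential $E(2R)$ term, yielding $E(R)\le C\sum_j R^{-\alpha_j}K^{\beta_j}$ with $K=\|u\|_{L^p(\mathbb{R}^2)}^p$. The right-hand side tends to $0$ as $R\to\infty$, and since $E$ is nondecreasing, $E\equiv 0$.

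The main obstacle is the contraction condition $\beta(\tau^\star+1)<1$, which determines the admissible range of $p$ and is the reason the lemma splits into two cases: for $\tau'\in(3,4)$ the exponent $\tau^\star+1=2/(4-\tau')$ can be arbitrarily large as $\tau'\uparrow 4$, so the stronger lower bound $p>\tau'-2$ is required to keep $\beta(\tau^\star+1)<1$ and ensure that the Young's inequality step generates a finite effective exponent $\alpha/(1-\kappa)$. A secondary subtlety is verifying $\beta_j\ge 1$ for all $A$-remainder terms, as demanded by Lemma~\ref{L2.6}; this follows from $\mu_j\ge 1$ and $\kappa_j<1$ term by term, using the explicit values of $\beta$, $\bar\tau$ and $\tau^\star$.
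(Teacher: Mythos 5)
Your argument is correct, and it closes the iteration in a genuinely different quantity than the paper does. The paper's proof works with $\int_{Q_R}|u|^q\,dx$ for a large exponent $q$: it combines the local Sobolev inequality (Lemma~\ref{L2.8}), Korn's inequality and the energy estimate~\eqref{4.16} to bound $\int_{Q_R}|u|^q\,dx$ by powers of $\int_{Q_{2R}}|u|^2\,dx$, then interpolates $L^2$ between $L^p$ and $L^q$ (rather than between $L^p$ and $L^4$ as you do), and observes that as $q\to\infty$ the self-referential exponents tend to $2-p$ and $\tfrac{2-p}{4-\tau'}$; choosing $q$ large, Young's inequality and Lemma~\ref{L2.6} give $\int_{Q_R}|u|^q\,dx\le C\sum_jR^{-\alpha_j}(\int|u|^p)^{\beta_j}\to0$, so $u\equiv0$ directly. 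You instead close the loop in $E(R)=\int_{Q_R}h(|\varepsilon(u)|)\,dx$, interpolating $L^2$ between $L^p$ and $L^4$ via Ladyzhenskaya, and your contraction condition $\beta(\tau^\star+1)=\tfrac{2-p}{4-\tau'}<1$ is literally the same inequality $p>\tau'-2$ that governs the paper's limit $\lim_q\tfrac{q(\tau^\star+1)}{2q'}$, so the case split (a)/(b) arises for the identical reason. What your route buys is the avoidance of the large-$q$ limiting argument and a more self-contained iteration; what it costs is the extra (easy but necessary) final step of identifying the kernel of $\varepsilon$ as the rigid motions $u=a+b(x_2,-x_1)$ and using $u\in L^p$ to kill $a$ and $b$ --- precisely the step the paper's Remark after Lemma~\ref{L4.4} flags with the counterexample $u=(-y,x)$, so you were right to include it. Two small points: your verification that every remainder exponent satisfies $\beta_j\ge1$ (needed for Lemma~\ref{L2.6}) and every $\alpha_j>0$ does check out term by term, and the qualifier ``for $R$ sufficiently large'' in your bound for $M(R)$ is unnecessary --- which is fortunate, since Lemma~\ref{L2.6} requires the inequality on all subsquares $Q_{2r}(z)\subset Q_{2R}(x_0)$, including small $r$.
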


\begin{proof}
For any $q>2$, by Lemma\ref{L2.8} we have
\begin{equation*}\label{4.21}
\renewcommand{\arraystretch}{1.25}
\begin{array}{lll}
\displaystyle\int_{Q_R(x_0)}|u|^q dx\leq C(q) \biggl\{R^2\biggl(\displaystyle\int_{Q_R(x_0)}|Du|^2dx \biggr)^{\frac{q}{2}} +\dfrac{R^2}{R^q}\biggl(\displaystyle\int_{Q_R(x_0)}|u|^2 dx\biggr)^{\frac{q}{2}} \biggr\},
\end{array}
\end{equation*}
from which, together with Lemma\ref{L2.2},  it follows that
\begin{equation}\label{4.22}
\renewcommand{\arraystretch}{1.25}
\begin{array}{lll}
\displaystyle\int_{Q_R(x_0)}|u|^q dx\leq C(q) \biggl\{R^2\biggl(\displaystyle\int_{Q_R(x_0)}|\varepsilon(u)|^2dx \biggr)^{\frac{q}{2}} +\dfrac{R^2}{R^q}\biggl(\displaystyle\int_{Q_R(x_0)}|u|^2 dx\biggr)^{\frac{q}{2}} \biggr\}.
\end{array}
\end{equation}

Combining\eqref{4.16} and\eqref{4.22} we obtain
\begin{equation}\label{4.23}
\renewcommand{\arraystretch}{1.25}
\begin{array}{lll}
\displaystyle\int_{Q_{R}(x_0)}|u|^qdx &\leq
 C(q)\biggl\{R^2\biggl(\displaystyle\int_{Q_R(x_0)}h(\varepsilon(u))dx \biggr)^{\frac{q}{2}} +\dfrac{R^2}{R^q}\biggl(\displaystyle\int_{Q_R(x_0)}|u|^2 dx\biggr)^{\frac{q}{2}} \biggr\}\\
 &\leq  C(\tau,q)\biggl\{\dfrac{R^2}{R^q}\biggl(\displaystyle\int_{Q_{2R}(x_0)}|u|^2 dx\biggr)^{\frac{q}{2}}
 +\dfrac{R^2}{R^{\bar\tau\frac{q}{2}}}\biggl(\displaystyle\int_{Q_{2R}(x_0)}|u|^2 dx\biggr)^{\frac{q}{2}}\\
&+\dfrac{R^2}{R^q}  \biggl(\displaystyle\int_{Q_{2R}(x_0)}|u|^2 dx \biggr)^q
+\dfrac{R^2}{R^{\bar\tau\frac{q}{2}}}  \biggl(\displaystyle\int_{Q_{2R}(x_0)}|u|^2 dx \biggr)^{\frac{q}{2}(\tau^{\star}+1)}  \biggr\}.
\end{array}
\end{equation}

On the other hand, for $1<p<2$,  H\"{o}lder's inequality gives
\begin{equation}\label{4.24}
\renewcommand{\arraystretch}{1.25}
\begin{array}{lll}
\displaystyle\int_{Q_{2R}(x_0)}|u|^2dx\leq \displaystyle\biggl(\int_{Q_{2R}(x_0)}|u|^pdx\biggr)^\frac{1}{p'}
\biggl(\int_{Q_{2R}(x_0)}|u|^qdx\biggr)^\frac{1}{q'},
\end{array}
\end{equation}
where $p'=\frac{q-p}{q-2}$, $q'=\frac{q-p}{2-p}$.

Putting together the estimates\eqref{4.23} and\eqref{4.24} we deduce

\begin{equation}\label{4.25}
\renewcommand{\arraystretch}{1.25}
\begin{array}{lll}
\displaystyle\int_{Q_{R}(x_0)}|u|^qdx &\leq \displaystyle C(\tau,q)\biggl\{\dfrac{R^2}{R^q}
\biggl(\int_{Q_{2R}(x_0)}|u|^pdx \biggr)^{\frac{q}{2p'}} \biggl(\int_{Q_{2R}(x_0)}|u|^qdx \biggr)^{\frac{q}{2q'}}\\
&+\displaystyle\dfrac{R^2}{R^{\bar{\tau}\frac{q}{2}}} \biggl(\int_{Q_{2R}(x_0)}|u|^pdx
\biggr)^{\frac{q}{2p'}} \biggl(\int_{Q_{2R}(x_0)}|u|^qdx \biggr)^{\frac{q}{2q'}} \\
&+ \displaystyle\dfrac{R^2}{R^q} \biggl(\int_{Q_{2R}(x_0)}|u|^pdx \biggr)^{\frac{q}{p'}}
\biggl(\int_{Q_{2R}(x_0)}|u|^qdx \biggr)^{\frac{q}{q'}}\\
&+\displaystyle\dfrac{R^2}{R^{\bar{\tau}\frac{q}{2}}} \biggl(\int_{Q_{2R}(x_0)}|u|^pdx \biggr)^{\frac{q(\tau^{\star}+1)}{2p'}}
 \biggl(\int_{Q_{2R}(x_0)}|u|^qdx \biggr)^{\frac{q(\tau^{\star}+1)}{2q'}} \biggr\}.
\end{array}
\end{equation}

Notice that we always have $2-p<1$ and $\frac{2-p}{4-\tau'}<1$, under our assumption $(a)$ or $(b)$ on $\tau'$ and $p$. Since  $\lim\limits_{q\rightarrow \infty}\frac{q}{q'}=2-p$  and
$\lim\limits_{q\rightarrow \infty}\frac{q(\tau^{\star}+1)}{2q'}=\frac{2-p}{4-\tau'} $, we can choose $q$ large enough s.t  $\frac{q}{q'}<1, \frac{q(\tau^{\star}+1)}{2q'}<1$.
Thus, for any $\delta>0$,  using Young inequality in\eqref{4.25} we have

\begin{equation*}\label{4.26}
\renewcommand{\arraystretch}{1.25}
\begin{array}{lll}
\displaystyle\int_{Q_{R}(x_0)}|u|^qdx &\leq \delta \displaystyle \int_{Q_{2R}(x_0)}|u|^qdx+ \displaystyle
C(\tau,\delta,q)\biggl\{\biggl(\dfrac{R^2}{R^q}\biggr)^{\alpha_1} \biggl(\int_{Q_{2R}(x_0)}|u|^pdx \biggr)^{\beta_1} \\
&+\displaystyle\biggl(\dfrac{R^2}{R^{\bar{\tau}\frac{q}{2}}}\biggr)^{\alpha_2} \biggl(\int_{Q_{2R}(x_0)}|u|^pdx \biggr)^{\beta_2}
+ \bigg(\displaystyle\dfrac{R^2}{R^q}\biggr)^{\alpha_3} \biggl(\int_{Q_{2R}(x_0)}|u|^pdx \biggr)^{\beta_3} \\
&+\displaystyle\biggl(\dfrac{R^2}{R^{\bar{\tau}\frac{q}{2}}}\biggr)^{\alpha_4} \biggl(\int_{Q_{2R}(x_0)}|u|^pdx \biggr)^{\beta_4} \biggr\},
\end{array}
\end{equation*}
where $\alpha_i,  \beta_i,  1\leq i \leq 4$, are positive numbers.

By Lemma\ref{L2.6} we obtain
\begin{equation}\label{4.27}
\renewcommand{\arraystretch}{1.25}
\begin{array}{lll}
\displaystyle\int_{Q_{R}(x_0)}|u|^qdx &\leq  \displaystyle C(\tau,q)\biggl\{\biggl(\dfrac{R^2}{R^q}\biggr)^{\alpha_1}
\biggl(\int_{Q_{2R}(x_0)}|u|^pdx \biggr)^{\beta_1} \\
&+\displaystyle\biggl(\dfrac{R^2}{R^{\bar{\tau}\frac{q}{2}}}\biggr)^{\alpha_2} \biggl(\int_{Q_{2R}(x_0)}|u|^pdx \biggr)^{\beta_2}
+ \bigg(\displaystyle\dfrac{R^2}{R^q}\biggr)^{\alpha_3} \biggl(\int_{Q_{2R}(x_0)}|u|^pdx \biggr)^{\beta_3} \\
&+\displaystyle\biggl(\dfrac{R^2}{R^{\bar{\tau}\frac{q}{2}}}\biggr)^{\alpha_4} \biggl(\int_{Q_{2R}(x_0)}|u|^pdx \biggr)^{\beta_4} \biggr\}.
\end{array}
\end{equation}

Letting $R\rightarrow \infty$ in\eqref{4.27} and observing $\int_{\mathbb{R}^2}|u|^pdx<\infty$ we deduce that
\[
\renewcommand{\arraystretch}{1.25}
\begin{array}{lll}
\displaystyle\int_{\mathbb{R}^2} |u|^q dx =0,
\end{array}
\]
therefore, $u= 0$, and the proof is complete.
\end{proof}

\begin{lemma}\label{L4.4}
Let $u\in C^1(\mathbb{R}^2, \mathbb{R}^2)$ be an entire weak solution of\eqref{a}, then the following results hold

$(a)$ If $2\leq \tau' < 4$ and $u\in L^p(\mathbb{R}^2, \mathbb{R}^2)$, $p\geq 2$, then $u$ must be a zero vector.

$(b)$ If $ \tau'\geq 4$ and $u\in L^{ \tau'}(\mathbb{R}^2, \mathbb{R}^2)$,  then $u$ must be a zero vector.

\end{lemma}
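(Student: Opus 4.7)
The strategy for both cases is to establish uniform local $L^\infty$ bounds on $u$, then invoke Theorem \ref{L2.7} to deduce $u$ is constant; the integrability hypothesis forces $u \equiv 0$.

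The first step is to prove the uniform local energy bound $\int_{Q_1(x_0)} h(|\varepsilon(u)|)\, dx \leq C$ independent of $x_0 \in \mathbb{R}^2$. In case (a), the hypothesis $p \geq 2$ combined with H\"older's inequality gives $\int_{Q_2(x_0)} |u|^2 \, dx \leq |Q_2|^{1-2/p} \|u\|_{L^p(\mathbb{R}^2)}^2$ uniformly in $x_0$, and Lemma \ref{L4.2} applied with $R = 1$ yields the claim. In case (b), $\tau' \geq 4 \geq 2$ together with $u \in L^{\tau'}(\mathbb{R}^2)$ gives uniform bounds on both $\int_{Q_2(x_0)} |u|^{\tau'}\, dx$ and (by H\"older) $\int_{Q_2(x_0)} |u|^2 \, dx$, and Lemma \ref{L4.1} applied with $R = 1$ yields the claim. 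Combined with $(2.1)$ and Korn's inequality (Lemma \ref{L2.2}), one obtains the uniform bounds $\int_{Q_1(x_0)} |\varepsilon(u)|^2\, dx + \int_{Q_1(x_0)} |Du|^2\, dx \leq C$.

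The second step is to upgrade this to a uniform local bound on $\int_{Q_1(x_0)} |D^2 u|^2\, dx$. I would revisit the proof of Lemma \ref{L3.1}: the global assumption $\int_{\mathbb{R}^2} h(|Du|)\, dx < \infty$ enters only to make the quadratic-in-$W$ coefficients $C\varepsilon \int_{Q_{2r}(z)} |Du|^2\, dx$ and $C\varepsilon \int_{Q_{2r}(z)} h(|\varepsilon(u)|)\, dx$ appearing in (3.10) small enough to be absorbed by a term $\delta \int_{Q_{2r}(z)} W\, dx$ with $\delta < 1/2$. The uniform local bounds from the first step serve exactly the same purpose, uniformly over all squares $Q_{2r}(z) \subset Q_2(x_0)$, so the Giaquinta--Modica iteration (Lemma \ref{L2.6}) still yields $\int_{Q_1(x_0)} W\, dx \leq C$ uniformly. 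Via the pointwise inequality $|D^2 u|^2 \leq C\, W$, this gives $\int_{Q_1(x_0)} |D^2 u|^2\, dx \leq C$ uniformly, so $\|u\|_{W^{2,2}(Q_1(x_0))} \leq C$ uniformly in $x_0$. In dimension two, Morrey's embedding $W^{2,2}(Q_1) \hookrightarrow L^\infty(Q_1)$ then forces $u \in L^\infty(\mathbb{R}^2, \mathbb{R}^2)$; by Theorem \ref{L2.7}, $u$ must be a constant vector, and the integrability hypothesis forces that constant to be zero.

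The main obstacle is the adaptation of Lemma \ref{L3.1} to the present setting where no global energy bound is available: one must verify that the smallness argument used to absorb the obstructive quadratic terms in (3.10) depends only on uniform local bounds on $\int_{Q_{2R}(x_0)} |Du|^2\, dx$ and $\int_{Q_{2R}(x_0)} h(|\varepsilon(u)|)\, dx$ at a single fixed radius, rather than on the global integral $\int_{\mathbb{R}^2} h(|Du|)\, dx$. Once this local variant is established, the remaining steps---Korn's inequality, Sobolev embedding, and the bounded-solution Liouville theorem---are routine.
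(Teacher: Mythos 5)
Your proposal is correct and follows essentially the same route as the paper: uniform local energy bounds from Lemma \ref{L4.2} (resp.\ Lemma \ref{L4.1}) plus H\"older and Korn, then the second-order estimate of Lemma \ref{L3.1} to get uniform local $W^{2,2}$ control, Sobolev embedding to conclude $u\in L^\infty$, and finally Theorem \ref{L2.7}. Your explicit remark that Lemma \ref{L3.1} must be rerun with the uniform local bounds replacing the global hypothesis $\int_{\mathbb{R}^2}h(|Du|)\,dx<\infty$ is a point the paper glosses over by citing Lemma \ref{L3.1} directly, and your justification of why the absorption argument survives is the right one.
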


\begin{proof}
For  $(a)$,  we will first show that the integrals of $h(|\varepsilon(u)|)$ and $|Du|^2$ are both local uniformly bounded. In fact, for any $x_0\in \mathbb{R}^2$, choosing $R=2$ in\eqref{4.16} and recalling
the condition\eqref{1} and\eqref{2} we obtain
\begin{equation*}\label{4.28}
\renewcommand{\arraystretch}{1.25}
\begin{array}{lll}
\displaystyle\int_{Q_2(x_0)}|\varepsilon(u)|^2dx\leq C \displaystyle\int_{Q_2(x_0)}h(|\varepsilon(u)|)dx \leq C(\|u\|_{L^p}, \tau),
\forall x_0\in \mathbb{R}^2,
\end{array}
\end{equation*}
from which, together with Lemma\ref{L2.2}, it gives
\begin{equation*}\label{4.29}
\renewcommand{\arraystretch}{1.25}
\begin{array}{lll}
\displaystyle\int_{Q_2(x_0)}|Du|^2dx\leq  C(\|u\|_{L^p}, \tau), \forall x_0\in \mathbb{R}^2.
\end{array}
\end{equation*}

Now Lemma\ref{L3.1} gives us 
\begin{equation*}\label{4.30}
\renewcommand{\arraystretch}{1.25}
\begin{array}{lll}
\displaystyle\int_{Q_1(x_0)}Wdx&\leq  C(\|u\|_{L^p},\tau)+C\displaystyle\int_{Q_2(x_0)} |u| dx\\
&\leq C(\|u\|_{L^p},\tau).
\end{array}
\end{equation*}

Thus by the equality $|D^2u(x)|\leq C |D\varepsilon(u)(x)|$ we have 
\begin{equation*}\label{4.31}
\renewcommand{\arraystretch}{1.25}
\begin{array}{lll}
\displaystyle\int_{Q_1(x_0)}|D^2u|^2dx \leq \displaystyle C\int_{Q_1(x_0)}|D\varepsilon(u)|^2dx\leq \displaystyle
C\int_{Q_1(x_0)}Wdx\leq  C(\|u\|_{L^p},\tau).
\end{array}
\end{equation*}
Now by  Sobolev's imbedding theorem,  we know $u\in
L^{\infty}(\mathbb{R}^2, \mathbb{R}^2)$. Hence, by Lemma\ref{L2.7},  
$u$ must be a constant vector. Since $u\in L^p(\mathbb{R}^2,
\mathbb{R}^2)$, then $u= 0$.

The proof of $(a)$ is complete. For $(b)$, we prove in the same way. Since $u\in L^{ \tau'}(\mathbb{R}^2, \mathbb{R}^2)$, Lemma\ref{L4.1}
gives us the uniform estimate\eqref{4.28} in this case. The rest of the proof is exactly the same. This 
finishes the proof of Lemma\ref{4.4}.
\end{proof}

\begin{remark}
If $u\in C^1(\mathbb{R}^2,\mathbb{R}^2)$  is an entire weak solution of\eqref{a} and $\int_{\mathbb{R}^2}h(|\varepsilon(u)|)dx<\infty$, we can't deduce that $u$ must be a constant vector. The counter example is $u_1=-y, u_2=x$.
But if the integrals of $|u|^p, p\geq 1$ and $h(|\varepsilon(u)|)$ are both local uniformly bounded, then by Lemma\ref{L3.1} we know  $u\in L^\infty(\mathbb{R}^2,\mathbb{R}^2)$. Hence $u$ is a constant vector.
\end{remark}


\begin{acknowledgement}
The author was supported by the Academy of Finland. He thanks  Thomas Z\"urcher helped him with the \LaTeX.
\end{acknowledgement}

\bibliographystyle{alpha}
\bibliography{liouville2}

\begin{thebibliography}{MNRR96}

\bibitem[BFZ05]{BFZ}
M.~Bildhauer, M.~Fuchs, and X.~Zhong.
\newblock A lemma on the higher integrability of functions with applications to
  the regularity theory of two-dimensional generalized {N}ewtonian fluids.
\newblock {\em Manuscripta Math.}, 116(2):135--156, 2005.

\bibitem[Eva98]{Ev}
L.~C. Evans.
\newblock {\em Partial differential equations}, volume~19 of {\em Graduate
  Studies in Mathematics}.
\newblock American Mathematical Society, Providence, RI, 1998.

\bibitem[FS00]{FS}
M.~Fuchs and G.~Seregin.
\newblock {\em Variational methods for problems from plasticity theory and for
  generalized {N}ewtonian fluids}, volume 1749 of {\em Lecture Notes in
  Mathematics}.
\newblock Springer-Verlag, Berlin, 2000.

\bibitem[Fuc]{Fu1}
M.~Fuchs.
\newblock Liouville theorems for stationary flows of shear thickening fluids in
  the plane.
\newblock {\em J. Math. Fluid Mech.}
\newblock To appear.

\bibitem[Fuc12]{Fu2}
M.~Fuchs.
\newblock Stationary flows of shear thickening fluids in 2{D}.
\newblock {\em J. Math. Fluid Mech.}, 14(1):43--54, 2012.

\bibitem[FZ12]{FZ}
M.~Fuchs and G.~Zhang.
\newblock Liouville theorems for entire local minimizers of energies defined on
  the class ${L} \log {L}$ and for entire solutions of the stationary
  {P}randtl-{E}yring fluid model.
\newblock {\em Calc. Var. Partial Differential Equations}, 44(1-2):271--295,
  2012.

\bibitem[Gal94a]{Ga1}
G.~P. Galdi.
\newblock {\em An introduction to the mathematical theory of the
  {N}avier-{S}tokes equations. {V}ol. {I}}, volume~38 of {\em Springer Tracts
  in Natural Philosophy}.
\newblock Springer-Verlag, New York, 1994.
\newblock Linearized steady problems.

\bibitem[Gal94b]{Ga2}
G.~P. Galdi.
\newblock {\em An introduction to the mathematical theory of the
  {N}avier-{S}tokes equations. {V}ol. {II}}, volume~39 of {\em Springer Tracts
  in Natural Philosophy}.
\newblock Springer-Verlag, New York, 1994.
\newblock Nonlinear steady problems.

\bibitem[GM82]{GM}
M.~Giaquinta and G.~Modica.
\newblock Nonlinear systems of the type of the stationary {N}avier-{S}tokes
  system.
\newblock {\em J. Reine Angew. Math.}, 330:173--214, 1982.

\bibitem[GT83]{GT}
D.~Gilbarg and N.~S. Trudinger.
\newblock {\em Elliptic partial differential equations of second order}, volume
  224 of {\em Grundlehren der Mathematischen Wissenschaften [Fundamental
  Principles of Mathematical Sciences]}.
\newblock Springer-Verlag, Berlin, second edition, 1983.

\bibitem[GW78]{GW}
D.~Gilbarg and H.~F. Weinberger.
\newblock Asymptotic properties of steady plane solutions of the
  {N}avier-{S}tokes equations with bounded {D}irichlet integral.
\newblock {\em Ann. Scuola Norm. Sup. Pisa Cl. Sci. (4)}, 5(2):381--404, 1978.

\bibitem[KNSS09]{KNSS}
G.~Koch, N.~Nadirashvili, G.~A. Seregin, and V.~Sverak.
\newblock Liouville theorems for the {N}avier-{S}tokes equations and
  applications.
\newblock {\em Acta Math.}, 203(1):83--105, 2009.

\bibitem[Lad69]{La}
O.~A. Ladyzhenskaya.
\newblock {\em The mathematical theory of viscous incompressible flow}.
\newblock Second English edition, revised and enlarged. Translated from the
  Russian by Richard A. Silverman and John Chu. Mathematics and its
  Applications, Vol. 2. Gordon and Breach Science Publishers, New York, 1969.

\bibitem[MNRR96]{MNRR}
J.~Malek, J.~Necas, M.~Rokyta, and M.~Ruzicka.
\newblock {\em Weak and measure-valued solutions to evolutionary {PDE}s},
  volume~13 of {\em Applied Mathematics and Mathematical Computation}.
\newblock Chapman \& Hall, London, 1996.

\bibitem[Tem83]{Te}
R.~Temam.
\newblock {\em Mathematical problems in plasticity}, volume~12 of {\em
  M\'ethodes Math\'ematiques de l'Informatique [Mathematical Methods of
  Information Science]}.
\newblock Gauthier-Villars, Montrouge, 1983.

\bibitem[Tem84]{Te1}
R.~Temam.
\newblock {\em Navier-{S}tokes equations}, volume~2 of {\em Studies in
  Mathematics and its Applications}.
\newblock North-Holland Publishing Co., Amsterdam, third edition, 1984.
\newblock Theory and numerical analysis, With an appendix by F. Thomasset.

\bibitem[Zha]{Z}
G.~Zhang.
\newblock A note on {L}iouville theorems for stationary flows of shear
  thickening fluids in the plane.
\newblock Submitted.

\end{thebibliography}

\end{document}